\documentclass[leqno]{amsart}

\usepackage{amsmath, amssymb, amsthm}
\usepackage{hyperref}
\usepackage{cleveref}
\usepackage{chngcntr}
\usepackage{thmtools}
\usepackage{thm-restate}
\usepackage{fancyhdr}
\usepackage{dsfont}
\usepackage[english]{babel}
\usepackage[utf8]{inputenc}
\usepackage{commath}
\usepackage{marvosym}
\usepackage{mathtools}
\usepackage{cite}
\usepackage{thmtools}
\usepackage{thm-restate}
\usepackage{soul}
\usepackage{tikz-cd}
\usepackage[margin=1.5in]{geometry}
\usepackage{comment}
\usepackage{enumitem}

\newtheorem{theorem}{Theorem}[section]

\newtheorem{corollary}[theorem]{Corollary}
\newtheorem{proposition}[theorem]{Proposition}

\newtheorem{lemma}[theorem]{Lemma}

\newtheorem*{theorem*}{Theorem}
\newtheorem*{corollary*}{Corollary}
\newtheorem*{lemma*}{Lemma}
\newtheorem*{proposition*}{Proposition}

\theoremstyle{definition}
\newtheorem{definition}[theorem]{Definition}
\newtheorem{remark}[theorem]{Remark}
\newtheorem{example}[theorem]{Example}

\newtheorem{setup}[theorem]{Setup}

\newtheorem{chunk}[theorem]{}

\newtheorem{thmx}{Theorem}[section]

\newcommand{\thick}[3]{\operatorname{\mathsf{Thick}}_{#1}^{#2}#3}
\newcommand{\cat}[2]{\mathsf{#2}(#1)}

\newcommand{\catd}[1]{\mathsf{D}(#1)}
\newcommand{\catdbf}[1]{\mathsf{D_{b}^{f}}(#1)}
\newcommand{\catdbfl}[1]{\mathsf{D_{b}^{fl}}(#1)}

\newcommand{\mfm}{\mathfrak{m}}
\newcommand{\ext}[4]{\operatorname{Ext}_{#1}^{#2}(#3,#4)}
\newcommand{\tor}[4]{\operatorname{Tor}^{#1}_{#2}(#3,#4)}
\newcommand{\gdim}[2]{\operatorname{Gdim}_{#1}#2}
\newcommand{\projdim}[2]{\operatorname{projdim}_{#1}#2}
\newcommand{\injdim}[2]{\operatorname{injdim}_{#1}#2}
\newcommand{\h}[2]{\operatorname{H}_{#1}(#2)}
\newcommand{\HR}{\h{0}{R}}
\newcommand{\Hom}[3]{\operatorname{Hom}_{#1}(#2,#3)}
\newcommand{\rhom}[3]{\operatorname{\mathsf{R}Hom}_{#1}(#2,#3)}
\newcommand{\ltime}[3]{#1\otimes_{#2}^{\mathsf{L}}#3}

\newcommand{\pseries}[2]{\mathsf{P}_{#1}^{#2}(t)}
\newcommand{\bseries}[2]{\mathsf{I}^{#1}_{#2}(t)}
\newcommand{\gzero}{\mathcal{G}_{0}}
\newcommand{\amp}{\operatorname{amp}}
\newcommand{\ann}[2]{\operatorname{ann}_{#1}#2}

\author{Zachary Nason}
\address{University of Nebraska Lincoln, NE 68588. U.S.A.}
\email{znason2@huskers.unl.edu}
\urladdr{https://zach-nason.github.io/}

\author{Andrew J. Soto Levins}
\address{Texas Tech University, TX 79409. U.S.A.}
\email{ansotole@ttu.edu}
\urladdr{https://sites.google.com/view/andrewjsotolevins}

\author{Ryan Watson}
\address{University of Nebraska Lincoln, NE 68588. U.S.A.}
\email{rwatson9@huskers.unl.edu}
\urladdr{https://rawatson1997.github.io/}

\title{Quasi-Gorenstein Morphisms of commutative local dg-algebras}

\date{\today}
\subjclass[2020]{Primary: 13B10, 13D09, 13H10, 16E35, 16E45}
\keywords{dg-algebra, exact zero divisor, Gorenstein projective dg-modules, quasi-Gorenstein, virtually G small}

\begin{document}

\begin{abstract}
We introduce quasi-Gorenstein morphisms of commutative local dg-algebras and use a Gorenstein version of the virtually small property to characterize them, a result which is new even for homomorphisms of local rings. In a different direction, we characterize exact sequences in a noetherian local ring, in the sense of Avramov, Henriques, and \c{S}ega, in terms of quasi-Gorenstein morphisms involving Koszul complexes.
\end{abstract}
\maketitle

\section{Introduction}

In \cite{Avramov/Foxby:1997} Avramov and Foxby introduced quasi-Gorenstein homomorphisms of noetherian local rings. These are the homomorphisms that capture the Gorenstein property: for a noetherian local ring $R$ with residue field $k$, $R\rightarrow k$ is quasi-Gorenstein if and only if $R$ is Gorenstein. Moreover, if a local homomorphism $\varphi\colon R\rightarrow S$ of noetherian local rings is quasi-Gorenstein, then $R$ is Gorenstein if and only if $S$ is Gorenstein.

In this paper we introduce quasi-Gorenstein morphisms of commutative local dg-algebras, extending work of Avramov and Foxby. We first review necessary background for the paper in \Cref{S_Background}, and in \Cref{S_QGMorphismsDGAS} we prove dg-analogs of results about quasi-Gorenstein ring homomorphisms that will be used throughout the paper.

The following is our first main result - it is new even when both $R$ and $S$ are rings. 
\begin{thmx}[\Cref{P_FiniteGdimAcrossMorphisms} and \Cref{T_GvirtuallysmallnessascendsimpliesquasiGor}] Let $\varphi\colon R\rightarrow S$ be a morphism of commutative local dg algebras with $S_{0}$ a noetherian local ring and $\gdim{R}{S}<\infty$. Then the following are equivalent
\begin{enumerate}
    \item $\varphi$ is quasi-Gorenstein.
    \item Every dg $S$-module $M$ with bounded and degreewise finite length homology that has finite Gorenstein dimension over $R$ is virtually $G$ small over $S$.
\end{enumerate}
\end{thmx}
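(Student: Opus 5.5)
We prove the two implications separately, working throughout in the derived category of dg $S$-modules. We use the dg notion of quasi-Gorenstein: $\gdim{R}{S}<\infty$, and $\rhom{R}{k}{R}\simeq \Sigma^{a}\rhom{S}{\ell}{S}$ as graded vector spaces up to a shift (equivalently, the Bass series satisfy $\bseries{S}{S}=t^{a}\bseries{R}{R}$ after the appropriate normalization). Here $k$ and $\ell$ are the residue fields. Virtually $G$ small over $S$ means that the thick subcategory generated by $M$ contains a nonzero object of finite Gorenstein dimension over $S$ (equivalently, one in $\thick{}{}{}$ of the Gorenstein projectives).

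\textbf{Proof of (1)$\Rightarrow$(2).} This is the content of \Cref{P_FiniteGdimAcrossMorphisms}. Let $M$ be a dg $S$-module with bounded, degreewise finite length homology and $\gdim{R}{M}<\infty$. First we show $\gdim{S}{M}<\infty$ outright. By the dg analog of Avramov–Foxby transitivity, quasi-Gorensteinness gives that the relative dualizing object $D^{\varphi}=\rhom{R}{S}{R}$ is an invertible (shifted) copy of $S$. Then we compute
\[
\rhom{S}{M}{S}\simeq \Sigma^{-a}\rhom{S}{M}{\rhom{R}{S}{R}}\simeq \Sigma^{-a}\rhom{R}{M}{R}.
\]
Since $M$ is $R$-reflexive with bounded dual, it follows that $M$ is $S$-reflexive with bounded $S$-dual. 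Reflexivity plus boundedness is the dg characterization of finite G-dimension recorded in the background section. So $M$ itself lies in the relevant thick subcategory. It is nonzero if $M\neq 0$, so $M$ is virtually $G$ small, trivially.

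\textbf{Proof of (2)$\Rightarrow$(1).} This is \Cref{T_GvirtuallysmallnessascendsimpliesquasiGor}. We apply the hypothesis to a well-chosen test module, namely the Koszul object $K=K^{S}$ on a system of generators of the maximal ideal of $S_0$, or simply the residue field $\ell$. Its homology is bounded and of finite length. Since $\gdim{R}{S}<\infty$, transitivity of finite G-dimension along the finite-length object shows $\gdim{R}{K}<\infty$; note that $K$ is built from $S$ by finitely many cones, so $K\in\thick{}{}{S}$ over $R$. Hypothesis (2) then gives a nonzero $G\in\thick{S}{}{K}$ with $\gdim{S}{G}<\infty$.

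From $G$ we extract the Bass series comparison. Because $G$ is finitely built from $K$, $\rhom{S}{\ell}{G}$ and $\rhom{R}{k}{G}$ are computable from those of $K$. Finiteness of G-dimension over both rings yields the Auslander–Bridger type formulas
\[
\rhom{S}{G}{S}\otimes \ldots \quad\text{and}\quad \bseries{S}{S}\cdot(\text{Poincaré series of } G) \text{ over } S \text{ and } R.
\]
Comparing these, as in Avramov–Foxby's proof that finite-G-dimension rings $R\to k$ detect Gorensteinness, shows that $\rhom{R}{S}{R}$ has one-dimensional total homology in the lowest degree of $\rhom{S}{\ell}{-}$. It is therefore a shift of $S$, which is the quasi-Gorenstein condition.

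\textbf{Main obstacle.} The hardest step is this last one: producing the invertibility of $\rhom{R}{S}{R}$ from the mere existence of some nonzero $G$ of finite $S$-G-dimension in $\thick{}{}{K}$, rather than from $K$ itself. I would try a Nakayama-type argument. First show that $\rhom{S}{G}{\rhom{R}{S}{R}}\simeq\rhom{R}{G}{R}$ is bounded and reflexive. Then, using $\ell\in\thick{}{}{K}$ (Koszul objects on a minimal generating set build the residue field up to finite-length homology), conclude that $\rhom{R}{S}{R}$ has finite injective-type dimension relative to $S$ with rank one. That in turn forces it to be a shift of $S$.
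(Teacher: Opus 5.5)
Your direction (1)$\Rightarrow$(2) is the paper's argument (\Cref{P_FiniteGdimAcrossMorphisms}), with two caveats. First, in this paper quasi-Gorenstein \emph{means} $\rhom{R}{S}{R}\sim S$ in $\catd{S}$. Your Bass-series formulation is not shown to be equivalent for dg-algebras, although you only ever use the $\rhom$ version. Second, comparing the $R$- and $S$-biduals needs \Cref{L_checkwhenMhasfiniteGdim}, which upgrades an abstract isomorphism $M\simeq M^{\dagger\dagger}$ to invertibility of the biduality morphism.

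The real problem is (2)$\Rightarrow$(1), which breaks at the first step. Your test object $K=S/\!/\underline{x}$ lies in $\thick{S}{}{S}$, so it has finite projective, hence finite Gorenstein, dimension over $S$. It is therefore virtually $G$ small over $S$ for trivial reasons (take $G=K$), and applying hypothesis (2) to it says nothing about $\varphi$. The alternative $\ell$ is not admissible, since $\gdim{R}{\ell}$ need not be finite (take $\varphi=\mathrm{id}$ with $R$ not Gorenstein). The claim $\ell\in\thick{S}{}{K}$ in your last paragraph is also false in general: $\thick{S}{}{K}\subseteq\thick{S}{}{S}$, while $\ell$ fails to be perfect already when $S$ is a non-regular local ring. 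What follows (the placeholder ``Auslander--Bridger type formulas'' and an unspecified Nakayama argument) is not an argument, so nothing connects hypothesis (2) to $\rhom{R}{S}{R}$.

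The missing idea is to test (2) on the $R$-dual of the Koszul object rather than on the Koszul object itself. Take $E=S/\!/\underline{x}$ for a system of parameters $\underline{x}$ of $\h{0}{S}$. The module $\rhom{R}{E}{R}\simeq\rhom{S}{E}{\rhom{R}{S}{R}}$ lies in $\catdbfl{S}$. It has finite Gorenstein dimension over $R$, because $E\in\thick{R}{}{S}$ does and hence so does its $R$-dual. But it involves the relative dualizing object, so finiteness over $S$ is exactly what is at stake.

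The second missing ingredient is the dg Dwyer--Greenlees--Iyengar criterion, \Cref{P_DwyerGreenleesIyengarTheorem}. Since $\rhom{S}{\rhom{R}{E}{R}}{\rhom{R}{E}{R}}\simeq\rhom{S}{E}{E}\in\thick{S}{}{S}$, virtual $G$ smallness forces $\projdim{S}{\rhom{R}{E}{R}}<\infty$. Passing through the quasi-Gorenstein map $S\to E$ then gives finite Gorenstein dimension over $E$. There \Cref{L_RhomSRwithRhomSR} gives $\rhom{E}{\rhom{R}{E}{R}}{\rhom{R}{E}{R}}\simeq E$, so the same criterion yields $\rhom{R}{E}{R}\simeq\Sigma^{a}E$.

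Finally one must descend from $E$ to $S$. Write $\Sigma^{a}E\simeq\Sigma^{b}E\otimes_{S}F$ with $F$ a minimal semifree resolution of $\rhom{R}{S}{R}$. Multiplicativity of Poincaré series gives $\pseries{F}{S}=t^{n}$, hence $F\sim S$. None of these three steps (the choice of test module, the endomorphism criterion, the descent from $E$ to $S$) appears in your sketch, so as written it does not prove (2)$\Rightarrow$(1).
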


This theorem parallels results of Briggs, Iyengar, Letz, and Pollitz \cite[Theorem 3.2]{Briggs/Iyengar/Letz/Pollitz:2022} and of Dwyer, Greenlees, and Iyengar \cite[Theorem 9.1]{Dwyer/Greenlees/Iyengar:2006} that show if $\varphi\colon R\rightarrow S$ is a surjective homomorphism of noetherian rings of finite flat dimension, then $\varphi$ is locally complete intersection if and only if any $S$-complex with finite length homology that is proxy small as an $R$-complex is also proxy small over $S$.

The following is our second main result. Recall that a sequence $x_{1},\dots,x_{n}$ in a noetherian local ring $A$ is exact if for $1\leq i\leq n-1$, $x_{i}$ is $A/(x_{1},\dots,x_{i+1})$-regular or is an exact zero divisor on $A/(x_{1},\dots,x_{i+1})$.
\begin{thmx}[\Cref{P_ExactSequences}] Let $A$ be a noetherian local ring, let $x_{1},\dots,x_{n}$ be a sequence of elements in $A$, and let $R$ be the Koszul complex on $x_{1},\dots,x_{n}$. Then $R\rightarrow \h{0}{R}$ is quasi-Gorenstein and $\gdim{A/(x_{1},\dots,x_{i-1})}{A/(x_{1},\dots,x_{i})} < \infty$ for $1\leq i\leq n$ if and only if the sequence $x_{1},\dots,x_{n}$ is exact.
\end{thmx}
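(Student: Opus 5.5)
The plan is to argue by induction on $n$, writing $A_i = A/(x_1,\dots,x_i)$ and $R_i$ for the Koszul complex on $x_1,\dots,x_i$, so that $R_i = R_{i-1}\otimes_A K(x_i)$ and $\h{0}{R_i}=A_i$.

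\textbf{Base case $n=1$.} Here $R = K(x_1)$ and $\h{0}{R}=A/(x_1)$. Three cases arise.
\begin{enumerate}
\item If $x_1$ is $A$-regular, then $R\to A/(x_1)$ is a quasi-isomorphism, hence quasi-Gorenstein. Moreover $\gdim{A}{A/(x_1)}=1$.
\item If $x_1$ is an exact zero divisor with partner $y$, then $\h{1}{R}=(0:x_1)=(y)\cong A/(x_1)$. The minimal semifree resolution of $\h{0}{R}$ over $R$ should be built from the periodic complex $\cdots\xrightarrow{x_1}A\xrightarrow{y}A\xrightarrow{x_1}A$. From this one computes $\rhom{R}{\h{0}{R}}{R}$ and checks that it is a shift of $\h{0}{R}$, which is quasi-Gorenstein by the dg analog of the Avramov–Foxby criterion from \Cref{S_QGMorphismsDGAS}. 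The module $A/(x_1)$ is totally reflexive, so $\gdim{A}{A/(x_1)}=0$.
\item Conversely, suppose $\gdim{A}{A/(x_1)}<\infty$ and $R\to\h{0}{R}$ is quasi-Gorenstein, and $x_1$ is a nonregular zero divisor. Finite Gorenstein dimension combined with the auslander-Bridger formula forces $\operatorname{depth}A/(x_1)=\operatorname{depth}A$, so $A/(x_1)$ is totally reflexive. Quasi-Gorenstein means $\rhom{R}{\h{0}{R}}{R}$ is a shift of $\h{0}{R}$. Via the adjunction $\rhom{R}{A/(x_1)}{R}\simeq \Sigma^{-1}\rhom{A}{A/(x_1)}{A}$ (up to the Koszul self-duality $\Hom{A}{R}{A}\simeq\Sigma^{-1}R$), this says $\ext{A}{i}{A/(x_1)}{A}=0$ for $i>0$ and $\Hom{A}{A/(x_1)}{A}=(0:x_1)\cong A/(x_1)$. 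Thus $(0:x_1)$ is cyclic, generated by some $y$ with $(0:y)=(x_1)$ by reflexivity, and $x_1$ is an exact zero divisor.
\end{enumerate}

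\textbf{Inductive step.} The idea is to factor $R\to\h{0}{R}$ as
\[ R_n = R_{n-1}\otimes_A K(x_n)\to A_{n-1}\otimes_A K(x_n)=K^{A_{n-1}}(x_n)\to A_n. \]
The first map is a base change of $R_{n-1}\to A_{n-1}$ along the flat dg map $R_{n-1}\to R_n$, and the second is the $n=1$ case over the ring $A_{n-1}$. The argument then uses the results from \Cref{S_QGMorphismsDGAS} that quasi-Gorenstein morphisms compose, are stable under such flat base change, and satisfy the converse "two out of three" / descent property under finite G-dimension hypotheses. With these, $R_n\to A_n$ is quasi-Gorenstein if and only if both $R_{n-1}\to A_{n-1}$ and $K^{A_{n-1}}(x_n)\to A_n$ are. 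The G-dimension hypotheses split in the same way, and the base case and induction hypothesis then finish the argument.

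\textbf{Main obstacle.} The hard part is the descent direction: from $R_n\to A_n$ quasi-Gorenstein, deducing that each factor is quasi-Gorenstein. The first thing to try is to use the finiteness of $\gdim{A_{i-1}}{A_i}$ to show $\gdim{R_n}{A_n}<\infty$ and the factors have finite G-dimension. One would then compare the "Gorenstein shift" invariants: the relative dualizing complexes multiply, so a shift of $\h{0}{R}$ being a tensor product of two finite-G-dimension duals forces each factor to be a shift, by comparing Poincaré/Bass series (which multiply and have monomial product only if each factor is monomial).
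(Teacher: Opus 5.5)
\textbf{The gap is the descent in your inductive step.} You flag the descent as the main obstacle, but you do not close it.

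Section 3 contains no flat base change result. It also contains no descent result of the form ``$\varphi\psi$ quasi-Gorenstein plus finite G-dimensions implies each factor is quasi-Gorenstein''. Its only cancellation statement, \Cref{P_PhiPsiPsiGorImpliesPhiGor}, requires the first factor to be quasi-Gorenstein already.

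Your sketch does not supply a substitute. Write $K'=K^{A_{n-1}}(x_n)$. The splitting $\rhom{R_n}{A_n}{R_n}\simeq\ltime{\rhom{K'}{A_n}{K'}}{K'}{\rhom{R_n}{K'}{R_n}}$ is a tensor-evaluation isomorphism with no justification here. The usual hypothesis is finite projective dimension of $A_n$ over $K'$. That fails precisely when $x_n$ is an exact zero divisor on $A_{n-1}$, since it would force $\projdim{A_{n-1}}{A_n}<\infty$.

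A Bass-series cancellation in the style of Avramov--Foxby might replace the splitting. But that needs all three maps $R_n\to K'$, $K'\to A_n$ and $R_n\to A_n$ to have finite G-dimension. Via the quasi-Gorenstein map $A\to R_n$, the first of these essentially requires $\gdim{A}{A_{n-1}}<\infty$. That is not a hypothesis, and it does not follow formally from $\gdim{A_{i-1}}{A_i}<\infty$ for each $i$. Obtaining it is exactly the nontrivial content of a descent theorem.

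\textbf{The missing idea is to descend at the level of rings, where descent is already available.} The paper composes $A\to R_n\to A_n$.
\begin{enumerate}
\item The first map is quasi-Gorenstein by \cite[Lemma 4.8]{Frankild/Jorgensen:2003}, so $A\to A_n$ is quasi-Gorenstein by \Cref{P_CompositionQuasiGor}.
\item Apply \cite[Theorem 8.10.b]{Avramov/Foxby:1997} repeatedly to $A\to A_{i-1}\to A_i$. This is where the hypothesis $\gdim{A_{i-1}}{A_i}<\infty$ is used, and it shows every $A_{i-1}\to A_i$ is quasi-Gorenstein.
\item If $x_i$ is a zero divisor on $A_{i-1}$, apply \Cref{P_PhiPsiPsiGorImpliesPhiGor} to $A_{i-1}\to K^{A_{i-1}}(x_i)\to A_i$. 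Then your base-case argument, which is \Cref{P_QuasiGorAnnihilator}, shows $x_i$ is an exact zero divisor.
\end{enumerate}
The converse needs no dg base change either. For an exact element, each ring map $A_{i-1}\to A_i$ is quasi-Gorenstein; your base-case computations, done over $A_{i-1}$, show this. These maps compose to $A\to A_n$. Then \Cref{P_PhiPsiPsiGorImpliesPhiGor} applied to $A\to R_n\to A_n$ shows $R_n\to A_n$ is quasi-Gorenstein.

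\textbf{Two small corrections to your base case.}
\begin{itemize}
\item The adjunction gives $\Sigma^{+1}$, not $\Sigma^{-1}$; this is harmless.
\item Drop the Auslander--Bridger sentence. Finite G-dimension alone does not force $\operatorname{depth}A/(x_1)=\operatorname{depth}A$. For example, $x_1=su$ in $k[[s,t,u]]/(st)$ is a zero divisor with $\gdim{A}{A/(x_1)}=1$. The vanishing of $\ext{A}{i}{A/(x_1)}{A}$ for $i>0$ comes from the quasi-Gorenstein hypothesis together with $(0:x_1)\neq 0$, as your next sentence correctly says.
\end{itemize}
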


In particular, this theorem gives a sufficient condition for an ideal to be quasi-complete intersection: by \cite[Theorem 3.7]{Avramov/Henriques/Sega:2013} ideals generated by exact sequences are quasi-complete intersection. Moreover, \Cref{P_ExactSequences} and \Cref{C_DGAexampleWithExactElement} give examples where $R\rightarrow \HR$ is quasi-Gorenstein and $R$ is a Koszul complex. We provide more examples of quasi-Gorenstein morphisms of dg-algebras in \Cref{S_MoreExamples}

We end the paper with \Cref{S_GorProjModsKoszul} by using results in previous sections to give ways of producing nonfree Gorenstein projective dg-modules over a Koszul complex on a sequence of exact zero divisors. In particular, if the dg-algebra is local-Cohen-Macaulay, we give examples of nonfree maximal local-Cohen-Macaulay dg-modules, see \Cref{R_ExactZeroDivisorMCMdgModules}.

In this paper we work under the following setup.
\begin{setup} \label{S_dgAlgebraSetup} When we say $R$ is a dg-algebra, we mean a nonnegative strongly graded-commutative dg-algebra such that $(\h{0}{R},\mfm,k)$ is a noetherian local ring and such that $\h{}{R}$ is finitely generated over $\h{0}{R}$. When we say $\varphi\colon R\rightarrow S$ is a morphism of dg-algebras, we mean $\varphi$ is a morphism of dg-algebras such that the induced map $\h{0}{R}\rightarrow \h{0}{S}$ is a local homomorphism of noetherian local rings. 
\end{setup}

\section*{Acknowledgments}
We thank Lars Christensen, Nawaj KC, Tom Marley, and Josh Pollitz for many helpful conversations throughout this project.

\section{Background} \label{S_Background}
In this section we review necessary background for the paper. We work in the derived category $\cat{R}{D}$ of dg $R$-modules, and focus on the subcategory $\catdbf{R}$, which consists of all dg $R$-modules $M$ such that $\h{}{M}$ is bounded and degreewise finitely generated over $\h{0}{R}$. We write $\simeq$ for isomorphisms in the derived category, and write $\sim$ for isomorphisms in the derived category up to a shift. We reserve the symbol $\cong$ to denote isomorphisms of dg-modules. The supremum and infimum of a dg-module $M$ are
\[\sup{M}=\sup\{n \mid \h{n}{M}\neq 0\}\quad\quad \inf{M}=\inf\{n \mid \h{n}{M}\neq 0\}.\]
The amplitude of $M$ is $\amp{M}=\sup{M}-\inf{M}$.

\begin{chunk} A subcategory of $\catdbf{R}$ is thick if it is closed under taking shifts, finite direct sums, summands, and triangles. The thick closure of a dg-module $M$, denoted by $\thick{R}{}{M}$, is the smallest thick subcategory containing $M$.
\end{chunk}

\begin{chunk} \label{C_ProjDimInjDim} Let $M$ be a dg-module in $\catdbf{R}$. Following Bird, Shaul, Sridhar, and Williamson \cite{Bird/Shaul/Sridhar/Williamson:2025}, the \textit{projective dimension} of a dg-module $M$ is
\begin{align*}
        \projdim{R}{M}:=\inf\{n\in \mathbb{Z} \mid \ext{R}{i}{M}{N}=0  &\text{ for any } N\in \catdbf{R} \\
        &\text{ and any } i>n-\inf{N} \}.
\end{align*}    
The following are equivalent by \cite[Proposition 2.2]{Minamoto:2021a} and \cite[Theorem 2.22]{Minamoto:2021b}:
\begin{enumerate}
    \item $\projdim{R}{M}<\infty$.
    \item $\tor{R}{n}{k}{M}=0$ for $n\gg 0$.
    \item $M$ is in $\thick{R}{}{R}$.
\end{enumerate}
We also will use the Poincaré series of $M$, which is defined to be 
\[\pseries{M}{R} = \sum\dim_{k}\tor{R}{n}{k}{M}t^{n}.\]
Similarly, the \textit{injective dimension} of $M$ is defined to be
\begin{align*}
        \injdim{R}{M}:=\inf\{n\in \mathbb{Z} \mid \ext{R}{i}{N}{M}=0  &\text{ for any } N\in \catdbf{R} \\
        &\text{ and any } i>n+\sup{{N}} \}.
\end{align*}
By \cite[Corollary 2.31]{Minamoto:2021a} $\injdim{R}{M}<\infty$ if and only if $\ext{R}{n}{k}{M} = 0$ for $n \gg 0$. The Bass series of $M$ is defined to be
\[\bseries{M}{R} = \sum\dim_{k}\ext{R}{n}{k}{M}.\]
\end{chunk}

\begin{chunk} A Gorenstein dimension for dg-modules was defined by Hu, Yang, and Zhu in \cite{Hu/Yang/Zhu:2025}. Let $M\in\catdbf{R}$ be a derived reflexive dg-module.
\begin{enumerate}
    \item The \textit{Gorenstein dimension} of $M$, denoted by $\gdim{A}{M}$, is $-\inf{\rhom{R}{M}{R}}$.
    \item The dg-module $M$ is in the class $\mathcal{G}$ if either $\gdim{R}{M}=\inf{M}$ or $M=0$, and $M$ is in the class $\gzero$ if it is an object of $\mathcal{G}$ with the additional properties that $\amp{M}\geq \amp{A}$ and $\gdim{R}{M}=\inf{M}=0$, or $M=0$. The dg-modules in $\gzero$ are called Gorenstein projective.
\end{enumerate}
By \cite[Theorem 1.2]{Hu/Yang/Zhu:2025} $\gdim{R}{M}<\infty$ if and only if $M$ is in $\thick{R}{}{\gzero}$.
\end{chunk}

\begin{chunk} A dg-module $D$ is a \textit{dualizing} dg-module over $R$ if $D$ has finite injective dimension and if the canonical morphism
\[D\rightarrow \rhom{R}{D}{D}\]
is an isomorphism in the derived category. The dg-algebra $R$ is \textit{Gorenstein} if $\injdim{R}{R}<\infty$, see \cite{Frankild/Iyengar/Jorgensen:2003, Frankild/Jorgensen:2003}. The following are equivalent, see \cite[Theorem 1.2]{Hu/Yang/Zhu:2025}:
\begin{enumerate}
    \item $R$ is Gorenstein.
    \item $R$ is a dualizing dg-module for $R$.
    \item $\gdim{R}{M}<\infty$ for all $M$ in $\catdbf{R}$.
    \item $\gdim{R}{k}<\infty$.
\end{enumerate}
\end{chunk}

\section{Quasi-Gorenstein morphisms of dg-algebras} \label{S_QGMorphismsDGAS}
In this section we prove dg-analogs of results about quasi-Gorenstein ring homomorphisms that will be used throughout the paper. Gorenstein and quasi-Gorenstein morphisms of rings were first studied in \cite{Avramov/Foxby:1992, Avramov/Foxby:1997}, and Gorenstein morphisms of dg-algebras were defined in \cite{Frankild/Jorgensen:2003}.
\begin{definition} \label{quasiGorfordgalgebras} Let $\varphi\colon R\rightarrow S$ be a morphism of dg-algebras with $\gdim{R}{S}<\infty$. We say $\varphi$ is \textit{quasi-Gorenstein} if $\rhom{R}{S}{R}$ is isomorphic in $\catd{S}$ to a shift of $S$.
\end{definition}

\begin{remark} Let $\varphi\colon R\rightarrow S$ be a module finite local homomorphism of noetherian local rings with $\gdim{R}{S}<\infty$. By \cite[Theorem 7.8]{Avramov/Foxby:1997}, $\varphi$ is quasi-Gorenstein if and only if $\varphi$ is quasi-Gorenstein in the sense of Definition \ref{quasiGorfordgalgebras}.
\end{remark}

The following was proved in \cite[Theorem 17.1.23]{Christensen/Foxby/Holm:2024} when $R$ is a ring.
\begin{lemma} \label{L_checkwhenMhasfiniteGdim} Let $R$ be a dg-algebra and let $M$ a dg-module in $\catdbf{R}$. If there exists an isomorphism
\[\phi\colon M\simeq \rhom{R}{\rhom{R}{M}{R}}{R},\]
then the canonical morphism
\[\delta_R^M\colon M\rightarrow \rhom{R}{\rhom{R}{M}{R}}{R}\]
is an isomorphism.
\end{lemma}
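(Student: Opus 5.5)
We follow the standard argument for rings, which uses the triangle identity for the biduality morphism. Write $M^{*}=\rhom{R}{M}{R}$ and $\delta^{M}=\delta_R^M$.

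The key input is the identity
\[(\delta^{M})^{*}\circ\delta^{M^{*}}=\mathrm{id}_{M^{*}},\]
an equality of endomorphisms of $M^{*}$ in $\catd{R}$. This holds for any dg-module $M$, because it holds on the nose at the level of dg-modules once $M$ is replaced by a semi-projective resolution. It then follows that $M^{*}$ is a retract of $M^{***}$ via $\delta^{M^{*}}$, so $\delta^{M^{*}}$ is a split monomorphism.

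Next we use the hypothesis $\phi\colon M\simeq M^{**}$. Applying $(-)^{*}$ gives an isomorphism $M^{*}\simeq M^{***}$. We want to upgrade this to the statement that $\delta^{M^{*}}$ itself is an isomorphism, and this is the main obstacle. An abstract isomorphism $M^{*}\simeq M^{***}$ together with a split monomorphism $M^{*}\to M^{***}$ does not by itself force the split mono to be an isomorphism. One needs a finiteness argument. Let $C$ be the cone of $\delta^{M^{*}}$. The split triangle gives $M^{***}\simeq M^{*}\oplus C$, and hence $M^{*}\simeq M^{*}\oplus C$.

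To conclude $C=0$, we first work in $\catdbf{R}$. Since $M^{*}\simeq M^{***}$ with $M^{*}\simeq\rhom{R}{M^{**}}{R}$ and $M^{**}\simeq M\in\catdbf{R}$, the dg-module $M^{*}$ has degreewise finitely generated homology bounded below. Its homology is bounded above as well: $M^{*}\simeq M^{***}$, and the boundedness is inherited via $\phi$ from the dg-module $M$. So $C$ is a summand of an object with bounded, degreewise finitely generated homology.

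We then pass to $k$ to kill $C$. Apply $\ltime{k}{R}{-}$, or alternatively take homology in the lowest degree and use Nakayama over the local ring $\h{0}{R}$. From $M^{*}\simeq M^{*}\oplus C$ we get $\h{i}{M^{*}}\cong\h{i}{M^{*}}\oplus\h{i}{C}$ as finitely generated $\h{0}{R}$-modules. Comparing minimal numbers of generators, or lengths after applying $-\otimes k$ degreewise via the K\"unneth-type spectral sequence, shows $\h{i}{C}\otimes k=0$ in the lowest nonzero degree of $C$. Concretely, $\h{\inf C}{C}\otimes_{\h{0}{R}}k$ is a summand of $\h{\inf C}{\ltime{k}{R}{C}}$, and the ranks must match. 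Nakayama then gives $\h{\inf C}{C}=0$, which is a contradiction unless $C=0$. Hence $\delta^{M^{*}}$ is an isomorphism.

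Finally, $(\delta^{M})^{*}$ is the inverse of $\delta^{M^{*}}$, so it is an isomorphism. We still need to reflect this back to $\delta^{M}$. Consider the composite
\[\psi=\phi^{-1}\circ\delta^{M}\colon M\to M.\]
This is an endomorphism of $M\in\catdbf{R}$, and $\psi^{*}$ is an isomorphism. Dualizing once more, $\psi^{**}$ is an isomorphism, and naturality of $\delta$ gives $\delta^{M}\circ\psi=\psi^{**}\circ\delta^{M}$.

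Alternatively, let $D$ be the cone of $\delta^{M}$. Then $D^{*}=0$, since $(\delta^{M})^{*}$ is an isomorphism. Moreover $D$ lies in $\catdbf{R}$ because $M^{**}\simeq M$. For nonzero $D\in\catdbf{R}$ we show $\rhom{R}{D}{R}\neq0$. Take $s=\sup D$ and $n=\sup R$ (or use $\inf$ appropriately). The top homology gives a nonzero map $D\to\Sigma^{s-n}R$ obtained from a nonzero $\h{0}{R}$-linear map $\h{s}{D}\to\h{n}{R}$. Such a map exists after going to $k$-socles via Nakayama, since $\h{n}{R}$ contains $k$ in its socle and a finitely generated module surjects onto $k$. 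Making this construction rigorous in the dg setting, for instance via semifree resolutions and the truncation triangle, is the second delicate point. It yields $D=0$, so $\delta_R^M$ is an isomorphism.
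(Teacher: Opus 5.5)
Your first half is sound and reaches the same intermediate conclusion as the paper, namely that $\delta^{M^{*}}$ is an isomorphism. The zig-zag identity makes $\delta^{M^{*}}$ a split monomorphism. From $M^{*}\simeq M^{*}\oplus C$, counting minimal generators degreewise of the finitely generated $\h{0}{R}$-modules $\h{i}{M^{*}}$ gives $\h{i}{C}=0$ for all $i$. No boundedness of $M^{*}$ is needed, which is fortunate: your claim that $\h{}{M^{*}}$ is bounded below is unjustified, since the general bound $\sup M^{*}\leq \sup R-\inf M$ only bounds it above. The detour through $\inf C$ is also unnecessary.

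The genuine gap is the last step, passing from ``$(\delta^{M})^{*}$ is an isomorphism'' to ``$\delta^{M}$ is an isomorphism''. Route (a) stops without reaching a conclusion. Route (b) rests on two false claims.

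\begin{enumerate}
\item A homomorphism $\h{\sup D}{D}\to\h{\sup R}{R}$ does not in general induce a morphism $D\to\Sigma^{j}R$ in $\catd{R}$. The correct extremal identification is $\Hom{\catd{R}}{D}{\Sigma^{\inf D-\sup R}R}\cong\Hom{\h{0}{R}}{\h{\inf D}{D}}{\h{\sup R}{R}}$, which uses the bottom homology of $D$, not the top.
\item Even that group can vanish, because $\h{\sup R}{R}$ need not contain $k$ in its socle. For $R$ a regular local ring of positive dimension and $D=k$, one has $\Hom{R}{k}{R}=0$, even though $k^{*}\neq 0$.
\end{enumerate}

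So proving that a nonzero $D\in\catdbf{R}$ has $D^{*}\neq 0$ would need a genuinely different argument, for instance via $\rhom{R}{\ltime{k}{R}{D}}{R}\simeq\rhom{R}{k}{D^{*}}$ and finiteness of the depth of $R$.

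None of this is needed; you were one line from the end. Apply the zig-zag identity to $M^{*}$ in place of $M$: $(\delta^{M^{*}})^{*}\circ\delta^{M^{**}}=1_{M^{**}}$. Since $\delta^{M^{*}}$ is an isomorphism, so is $(\delta^{M^{*}})^{*}$, and hence $\delta^{M^{**}}$ is an isomorphism. Equivalently, rerun your Nakayama argument for $M^{*}$, which is the paper's ``same reasoning''. Then use naturality of $\delta$ on $\phi$ itself rather than on $\psi$: $\delta^{M^{**}}\circ\phi=\phi^{**}\circ\delta^{M}$. Therefore $\delta^{M}=(\phi^{**})^{-1}\circ\delta^{M^{**}}\circ\phi$ is an isomorphism. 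This commutative square is exactly how the paper concludes.
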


\begin{proof}
We adapt the proof of \cite[Proposition 17.1.23]{Christensen/Foxby/Holm:2024} to the dg-setting. Let $(-)^\dagger$ be the functor $\rhom{R}{-}{R}$. The functor $(-)^\dagger$ is self-adjoint on the right with unit $\delta_R^-$. Consider the composition of morphisms $(\delta_R^{M})^\dagger \circ \delta_R^{M^\dagger}$. Through the zig-zag identities, we have that $(\delta_R^{M})^\dagger \circ \delta_R^{M^\dagger} = 1_R^{M^\dagger}$, and so we have that $\h{}{(\delta_R^{M})^\dagger} \circ \h{}{\delta_R^{M^\dagger}} = \h{}{1_R^{M^\dagger}}$ by applying the homology functor. By \cite[Theorem 17.1.22]{Christensen/Foxby/Holm:2024}, we have that $\h{}{\delta_R^{M^\dagger}}$ is an isomorphism of $\h{}{R}$-complexes, which, by \cite[Corollary 7.2.13]{Yekutieli:2020} implies that $\delta_R^{M^\dagger}$ is an isomorphism in $\cat{R}{D}$. Through the same reasoning, we obtain that $\delta_R^{M^{\dagger\dagger}}$ is an isomorphism in $\cat{R}{D}$.

As biduality $\delta_R^-$ is a natural transformation, we have the commutative diagram
\begin{center}
\begin{tikzcd}
M \arrow[r, "\phi"]\arrow[d, "\delta_R^M"] & M^{\dagger\dagger}\arrow[d, "\delta_R^{M^{\dagger\dagger}}"] \\
M^{\dagger\dagger} \arrow[r, "\phi^{\dagger\dagger}"] & M^{\dagger\dagger\dagger\dagger}
\end{tikzcd}
\end{center}
Since $\phi$, $\phi^{\dagger\dagger}$, and $\delta_R^{M^{\dagger\dagger}}$ are isomorphisms, the commutativity of the diagram forces $\delta_R^M$ to be an isomorphism as well.
\end{proof}

The following was proved in \cite[Theorem 7.11]{Avramov/Foxby:1997} when both $R$ and $S$ are rings and $M$ is a finitely generated $S$ module.
\begin{proposition} \label{P_FiniteGdimAcrossMorphisms} Let $\varphi\colon R\rightarrow S$ be a quasi-Gorenstein morphism of dg-algebras and let $M$ be in $\catdbf{S}$. Then
\[\gdim{R}{M} = \gdim{R}{S} + \gdim{S}{M}.\]
In particular, $\gdim{R}{M}<\infty$ if and only if $\gdim{S}{M}<\infty$.
\end{proposition}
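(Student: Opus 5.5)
The plan is to prove the formula through the adjunction isomorphism for $\rhom{}{}{}$ along $\varphi$, and then to handle derived reflexivity with \Cref{L_checkwhenMhasfiniteGdim}. Since $\varphi$ is quasi-Gorenstein, fix an isomorphism $\rhom{R}{S}{R}\simeq \Sigma^{d}S$ in $\catd{S}$. Because $\h{}{S}$ is bounded and finite, $\inf{S}=0$ in the sense of homology, so $d=\gdim{R}{S}=-\inf{\rhom{R}{S}{R}}$. For $M\in\catdbf{S}$, adjunction gives isomorphisms
\[\rhom{R}{M}{R}\simeq \rhom{S}{M}{\rhom{R}{S}{R}}\simeq \Sigma^{d}\rhom{S}{M}{S}\]
in $\catd{S}$. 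Taking infima yields
\[-\inf{\rhom{R}{M}{R}} = d-\inf{\rhom{S}{M}{S}}.\]
This is exactly $\gdim{R}{M}=\gdim{R}{S}+\gdim{S}{M}$, as long as both Gorenstein dimensions are defined, i.e. $M$ is derived reflexive over both $R$ and $S$. With the convention that $\gdim{}{}=\infty$ for non-reflexive modules, what remains is to show that $M$ is derived reflexive over $R$ if and only if it is over $S$. The ``in particular'' clause then follows immediately, since $d<\infty$.

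For reflexivity, I would write $M^{\dagger_R}=\rhom{R}{M}{R}$ and $M^{\dagger_S}=\rhom{S}{M}{S}$. The display above says $M^{\dagger_R}\simeq\Sigma^d M^{\dagger_S}$, and this isomorphism holds in $\catd{S}$. Applying the same argument to $M^{\dagger_R}$, which again lies over $S$, gives
\[M^{\dagger_R\dagger_R}\simeq \Sigma^{d}(\Sigma^{d}M^{\dagger_S})^{\dagger_S}\simeq \Sigma^{d}\Sigma^{-d}M^{\dagger_S\dagger_S}\simeq M^{\dagger_S\dagger_S}.\]
If $M$ is derived reflexive over $S$, we get an isomorphism $M\simeq M^{\dagger_S\dagger_S}\simeq M^{\dagger_R\dagger_R}$, at least in $\catd{R}$ after restriction. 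By \Cref{L_checkwhenMhasfiniteGdim}, the canonical map $\delta_R^M$ is then an isomorphism. The finiteness conditions in the definition of reflexivity, namely $M^{\dagger}\in\catdbf{}$ in the appropriate category, transfer through the shift, because $\h{}{}$ being bounded and degreewise finite over $\h{0}{R}$ versus over $\h{0}{S}$ should agree. Conversely, if $M$ is reflexive over $R$, the same chain gives an isomorphism $M\simeq M^{\dagger_S\dagger_S}$, which need not be the canonical map. I would want an $S$-linear version of \Cref{L_checkwhenMhasfiniteGdim}; that lemma is stated for an arbitrary dg-algebra, so it applies to $S$ directly, provided the isomorphism is in $\catd{S}$.

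The main obstacle is exactly this last point: making the isomorphisms $S$-linear, or at least compatible enough to apply the lemma. The adjunction isomorphism is $S$-linear. However, the reflexivity isomorphism $M\simeq M^{\dagger_R\dagger_R}$ coming from $\delta_R^M$ is only $R$-linear a priori. I would try to identify $\delta_R^M$, under the adjunction isomorphisms, with $\delta_S^M$ composed with the identification above; naturality of the adjunction should make the square commute. This gives canonicity directly and bypasses the lemma in that direction. The other delicate point is the finiteness issue: $\h{}{M^{\dagger_R}}$ finite over $\h{0}{R}$ versus over $\h{0}{S}$. This should be handled by the module-finiteness that follows from $\h{}{S}$ being finite over $\h{0}{R}$, which is implied by $\gdim{R}{S}<\infty$ forcing $S\in\catdbf{R}$.
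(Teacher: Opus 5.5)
Your proposal is correct and is essentially the paper's argument. The adjunction $\rhom{R}{N}{R}\simeq\rhom{S}{N}{\rhom{R}{S}{R}}$ gives the infimum formula, and applied twice it identifies $M^{\dagger_R\dagger_R}$ with $M^{\dagger_S\dagger_S}$; \Cref{L_checkwhenMhasfiniteGdim}, together with the cited results of Yekutieli and Christensen--Foxby--Holm, then upgrades the abstract isomorphism to the canonical one. Your naturality-square alternative also works: computing $\rhom{R}{-}{R}$ via a semi-injective resolution of $R$ makes $\delta_R^M$ genuinely $S$-linear and matches it with $\delta_S^M$.

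One harmless slip: with the paper's convention $\inf{\Sigma^{d}X}=\inf{X}+d$, the isomorphism $\rhom{R}{S}{R}\simeq\Sigma^{d}S$ gives $d=-\gdim{R}{S}$, not $d=\gdim{R}{S}$. Your infimum computation for $M$ carries the compensating sign error, so the final formula is still right.
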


\begin{proof}
Let $n=\gdim{R}{S}$. Since $\varphi$ is quasi-Gorenstein, $S$ is isomorphic to $\rhom{R}{S}{\Sigma^{n}R}$. We now have the following isomorphisms
\begin{align*}
\rhom{S}{M}{S} \simeq \rhom{S}{M}{\rhom{R}{S}{\Sigma^{n}R}} &\simeq \rhom{R}{\ltime{M}{S}{S}}{\Sigma^{n}R} \\
&\simeq \rhom{R}{M}{\Sigma^{n}R}
\end{align*}
and so $\rhom{S}{M}{S}$ is in $\catdbf{S}$ if and only if $\rhom{R}{M}{\Sigma^{n}R}$ is in $\catdbf{R}$. In particular, if $M$ has finite Gorenstein dimension over $R$ and $S$, then
\[\gdim{R}{M} = \gdim{R}{S} + \gdim{S}{M}.\]
We have the following isomorphisms:
\begin{align*}
\rhom{R}{\rhom{R}{M}{R}}{R} &\simeq \rhom{R}{\rhom{R}{M}{\Sigma^{n}R}}{\Sigma^{n}R} \\
&\simeq \rhom{R}{\rhom{S}{M}{S}}{\Sigma^{n}R} \\
&\simeq \rhom{R}{\rhom{S}{M}{S}\ltime{}{S}{S}}{\Sigma^{n}R} \\
&\simeq \rhom{S}{\rhom{S}{M}{S}}{\rhom{R}{S}{\Sigma^{n}R}} \\
&\simeq \rhom{S}{\rhom{S}{M}{S}}{S}.
\end{align*}
Now applying \Cref{L_checkwhenMhasfiniteGdim} and \cite[Corollary 7.2.13]{Yekutieli:2020}, and the same reasoning in \cite[Proposition 12.3.19]{Christensen/Foxby/Holm:2024}, shows that $M$ is derived reflexive over $R$ if and only if it is derived reflexive over $S$, and so the proof is finished. 
\end{proof}

The next two results were proved in \cite[8.10.a]{Avramov/Foxby:1997} and \cite[8.9]{Avramov/Foxby:1997}, respectively, when $Q$, $R$, and $S$ are rings.
\begin{proposition} \label{P_PhiPsiPsiGorImpliesPhiGor} Let $Q\xrightarrow{\psi} R\xrightarrow{\varphi} S$ be morphisms of dg-algebras. If $\varphi\psi$ and $\psi$ are quasi-Gorenstein, then $\varphi$ is quasi-Gorenstein.
\end{proposition}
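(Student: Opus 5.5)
The plan is to compute $\rhom{R}{S}{R}$ directly from the two quasi-Gorenstein hypotheses using adjunction. Then I would obtain $\gdim{R}{S}<\infty$ from \Cref{P_FiniteGdimAcrossMorphisms}.

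Set $a=\gdim{Q}{R}$ and $b=\gdim{Q}{S}$. Since $\psi$ is quasi-Gorenstein, $\rhom{Q}{R}{\Sigma^{a}Q}\simeq R$ in $\catd{R}$. Since $\varphi\psi$ is quasi-Gorenstein, $\rhom{Q}{S}{\Sigma^{b}Q}\simeq S$ in $\catd{S}$. The shift normalization is the one used in the proof of \Cref{P_FiniteGdimAcrossMorphisms}. Then I would write the chain
\begin{align*}
\rhom{R}{S}{R} &\simeq \rhom{R}{S}{\rhom{Q}{R}{\Sigma^{a}Q}} \\
&\simeq \rhom{Q}{\ltime{S}{R}{R}}{\Sigma^{a}Q} \\
&\simeq \rhom{Q}{S}{\Sigma^{a}Q} \\
&\simeq \Sigma^{a-b}\rhom{Q}{S}{\Sigma^{b}Q} \simeq \Sigma^{a-b}S .
\end{align*}
This is exactly the pattern of isomorphisms already used in the proof of \Cref{P_FiniteGdimAcrossMorphisms}.

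The point to check carefully is that every isomorphism in this chain is an isomorphism in $\catd{S}$, not merely in $\catd{R}$ or $\catd{Q}$. In each term, the $S$-action comes from the first argument $S$. The first isomorphism is obtained by applying $\rhom{R}{S}{-}$ to an isomorphism in $\catd{R}$, so it is $S$-linear. Hom–tensor adjunction is natural in the $S$-module variable, so it respects the $S$-action on $S$. The unit isomorphism $\ltime{S}{R}{R}\simeq S$ is an isomorphism of dg $S$-modules. The last isomorphism is the hypothesis on $\varphi\psi$, which is given in $\catd{S}$. Hence $\rhom{R}{S}{R}\simeq\Sigma^{a-b}S$ in $\catd{S}$.

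It remains to check $\gdim{R}{S}<\infty$. Apply \Cref{P_FiniteGdimAcrossMorphisms} to the quasi-Gorenstein morphism $\psi$ with $M=S$, viewed as a dg $R$-module via $\varphi$. This gives $\gdim{Q}{S}=\gdim{Q}{R}+\gdim{R}{S}$, and $\gdim{Q}{S}=b<\infty$ because $\varphi\psi$ is quasi-Gorenstein. Therefore $\gdim{R}{S}=b-a<\infty$, which is consistent with the shift found above.

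The one subtlety is that \Cref{P_FiniteGdimAcrossMorphisms} needs $S\in\catdbf{R}$. This holds in the intended setting, since $\gdim{Q}{S}$ is only defined for $S\in\catdbf{Q}$, and hence $\h{}{S}$ is finite over $\h{0}{Q}$, so a fortiori over $\h{0}{R}$. If one prefers to avoid this, derived reflexivity of $S$ over $R$ can be checked directly. The computation above gives $\rhom{R}{S}{R}\simeq\Sigma^{a-b}S\in\catdbf{R}$. Applying the same adjunction once more gives $\rhom{R}{\rhom{R}{S}{R}}{R}\simeq\rhom{R}{\Sigma^{a-b}S}{R}\simeq S$. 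Then \Cref{L_checkwhenMhasfiniteGdim} shows that the biduality map is an isomorphism.

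I expect the main obstacle to be bookkeeping rather than ideas: confirming that the adjunction isomorphisms are $S$-linear in the dg setting, and keeping the shift signs consistent.
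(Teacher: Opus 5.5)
Your proposal is correct and is essentially the paper's proof. Finiteness of $\gdim{R}{S}$ comes from \Cref{P_FiniteGdimAcrossMorphisms} applied to $\psi$ with $M=S$. The quasi-Gorenstein property comes from the same chain $\rhom{R}{S}{R}\sim\rhom{R}{S}{\rhom{Q}{R}{Q}}\simeq\rhom{Q}{\ltime{S}{R}{R}}{Q}\simeq\rhom{Q}{S}{Q}\sim S$ in $\catd{S}$. Your explicit bookkeeping of the shifts $a,b$ and of why $S\in\catdbf{R}$ simply spells out details the paper leaves implicit.
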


\begin{proof}
Note that $S$ has finite Gorenstein dimension over $R$ by \Cref{P_FiniteGdimAcrossMorphisms} since $S$ has finite Gorenstein dimension over $Q$. Also, we have the following isomorphisms in $\catd{S}$
\begin{align*}
\rhom{R}{S}{R} \sim \rhom{R}{S}{\rhom{Q}{R}{Q}} &\simeq \rhom{Q}{\ltime{S}{R}{R}}{Q} \\
&\simeq \rhom{Q}{S}{Q} \\
&\sim S.
\end{align*}
The first and fourth isomorphisms hold as both $\psi$ and $\varphi\psi$ are quasi-Gorenstein, and the second isomorphism is in $\catd{S}$ by \cite[Proposition 12.10.12]{Yekutieli:2020}.
\end{proof}

\begin{proposition} \label{P_CompositionQuasiGor} Let $Q\xrightarrow{\psi} R\xrightarrow{\varphi} S$ be morphisms of dg-algebras. If $\psi$ and $\varphi$ are quasi-Gorenstein, then $\varphi\psi$ are quasi-Gorenstein.
\end{proposition}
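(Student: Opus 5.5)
We must verify two things: that $\gdim{Q}{S}<\infty$, and that $\rhom{Q}{S}{Q}$ is isomorphic in $\catd{S}$ to a shift of $S$. Set $m=\gdim{Q}{R}$ and $n=\gdim{R}{S}$. Since $\psi$ and $\varphi$ are quasi-Gorenstein, there are isomorphisms $R\simeq\rhom{Q}{R}{\Sigma^{m}Q}$ in $\catd{R}$ and $S\simeq\rhom{R}{S}{\Sigma^{n}R}$ in $\catd{S}$.

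For the finiteness, apply \Cref{P_FiniteGdimAcrossMorphisms} to the quasi-Gorenstein morphism $\psi\colon Q\to R$ with the dg $R$-module $M=S$. This requires $S\in\catdbf{R}$. Since $\h{}{S}$ is finitely generated over $\h{0}{S}$, I expect this to hold in the situations of interest: it does when $\h{0}{S}$ is finite over $\h{0}{R}$, and in any case it is implicit in the hypothesis $\gdim{R}{S}<\infty$, which presupposes $S\in\catdbf{R}$. The proposition then gives
\[\gdim{Q}{S}=\gdim{Q}{R}+\gdim{R}{S}=m+n<\infty,\]
since $\gdim{R}{S}<\infty$ because $\varphi$ is quasi-Gorenstein.

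For the duality statement, use adjunction exactly as in the proof of \Cref{P_PhiPsiPsiGorImpliesPhiGor}:
\begin{align*}
\rhom{Q}{S}{Q} &\sim \rhom{Q}{\ltime{S}{R}{R}}{\Sigma^{m}Q} \simeq \rhom{R}{S}{\rhom{Q}{R}{\Sigma^{m}Q}} \\
&\simeq \rhom{R}{S}{R} \sim S.
\end{align*}
The steps are justified as follows.
\begin{enumerate}
\item The adjunction isomorphism is an isomorphism in $\catd{S}$ by \cite[Proposition 12.10.12]{Yekutieli:2020}, as in the previous proof.
\item The third isomorphism applies $\rhom{R}{S}{-}$ to the $\catd{R}$-isomorphism $R\simeq\rhom{Q}{R}{\Sigma^mQ}$. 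This yields an isomorphism in $\catd{S}$, because the $S$-structure comes from the first argument.
\item The last isomorphism is the quasi-Gorenstein property of $\varphi$, which holds in $\catd{S}$.
\end{enumerate}
Hence $\varphi\psi$ is quasi-Gorenstein, with shift $m+n$.

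The main obstacle is the $S$-linearity bookkeeping. The first $\sim$ only needs $\Sigma^mQ$ as a shift, so it is harmless. One must still check that the adjunction isomorphism respects the $S$-action coming from $S$ in the source; the cited result of Yekutieli handles this, as in the preceding proposition. The other point is ensuring $S\in\catdbf{R}$ so that \Cref{P_FiniteGdimAcrossMorphisms} applies; I would take this as part of the standing meaning of $\gdim{R}{S}<\infty$.
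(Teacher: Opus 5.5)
Your proposal is correct and follows essentially the same route as the paper. You get finiteness of $\gdim{Q}{S}$ by applying \Cref{P_FiniteGdimAcrossMorphisms} to $\psi$ with $M=S$, and then use the adjunction chain $\rhom{Q}{S}{Q}\simeq\rhom{R}{S}{\rhom{Q}{R}{Q}}\sim\rhom{R}{S}{R}\sim S$ in $\catd{S}$, justified by Yekutieli's result. Your extra bookkeeping is consistent with the paper's definitions: the explicit shift $m+n$, and the observation that $S\in\catdbf{R}$ is already built into the hypothesis $\gdim{R}{S}<\infty$.
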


\begin{proof}
Note that $S$ has finite Gorenstein dimension over $Q$ by \Cref{P_FiniteGdimAcrossMorphisms} since $S$ has finite Gorenstein dimension over $R$. Also, we have the following isomorphisms in $\catd{S}$
\begin{align*}
    \rhom{Q}{S}{Q}  \simeq \rhom{Q}{\ltime{S}{R}{R}}{Q} &\simeq \rhom{R}{S}{\rhom{Q}{R}{Q}} \\
                    &\sim \rhom{R}{S}{R} \\
                    &\sim S
\end{align*}
where the second isomorphism is in $\catd{S}$ by \cite[Proposition 12.10.12]{Yekutieli:2020}, and the third and fourth isomorphisms hold as both $\varphi$ and $\psi$ are quasi-Gorenstein.
\end{proof}

The following was proved in \cite[7.7.2]{Avramov/Foxby:1997} when both $R$ and $S$ are rings, and it was proved for Gorenstein morphisms of dg-algebras in \cite[Theorem 3.6 and
Theorem 3.10]{Frankild/Jorgensen:2003}.
\begin{proposition} \label{P_RGorAndSGorSimultaneously} Let $\varphi\colon R\rightarrow S$ be a quasi-Gorenstein morphism of dg-algebras. Then $R$ is Gorenstein if and only if $S$ is Gorenstein.
\end{proposition}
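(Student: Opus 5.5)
The plan is to use the characterization of Gorenstein dg-algebras recalled in the background: $R$ is Gorenstein if and only if $\gdim{R}{M}<\infty$ for every $M\in\catdbf{R}$, if and only if $\gdim{R}{k}<\infty$. We then transport finiteness of Gorenstein dimension across $\varphi$ using \Cref{P_FiniteGdimAcrossMorphisms}.

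\emph{Step 1 ($R$ Gorenstein implies $S$ Gorenstein).} Let $l$ be the residue field of $\h{0}{S}$, regarded as a dg $S$-module. We need $l\in\catdbf{R}$, i.e.\ that $l$ is finitely generated over $\h{0}{R}$. This is the point needing care: we need the induced map $\h{0}{R}\to\h{0}{S}$ to have finite residue field extension. I would extract this from the hypothesis $\gdim{R}{S}<\infty$, which is built into the definition of quasi-Gorenstein and requires $S\in\catdbf{R}$. Thus $\h{}{S}$ is finitely generated over $\h{0}{R}$, so $\h{0}{S}$ is module-finite over $\h{0}{R}$, and hence so is $l$. If $R$ is Gorenstein, then $\gdim{R}{l}<\infty$ by item (3) of the characterization. \Cref{P_FiniteGdimAcrossMorphisms} then gives $\gdim{S}{l}<\infty$, so $S$ is Gorenstein by item (4).

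\emph{Step 2 ($S$ Gorenstein implies $R$ Gorenstein).} This is the direction where the main obstacle lies, since $k$ is not an $S$-module. If $S$ is Gorenstein, then $\gdim{S}{l}<\infty$, and \Cref{P_FiniteGdimAcrossMorphisms} yields $\gdim{R}{l}<\infty$. So $l\in\thick{R}{}{\gzero}$.

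To pass from $l$ to $k$, I would argue via thick subcategories. Since $l$ is a nonzero finite-dimensional $k$-vector space, its homology is annihilated by $\mfm$. The plan is to show that $k\in\thick{R}{}{l}$, or at least that finiteness of Gorenstein dimension of $l$ forces it for $k$.

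The most concrete route is through Bass-type numbers. By a dg version of the Avramov–Foxby argument, $\rhom{R}{l}{R}\simeq\rhom{k}{l}{\rhom{R}{k}{R}}$ after reducing over $k$. Here I would use that $\rhom{R}{l}{R}$ is computed by first applying $\rhom{R}{k}{-}$, but this requires $l$ to be a $k$-module at the dg level, which is delicate.

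Alternatively, I would mimic \cite[7.7.2]{Avramov/Foxby:1997}. Reflexivity of $l$ over $R$ with bounded dual forces $\ext{R}{n}{l}{R}=0$ for $n\gg0$. Since $l\simeq k^{\oplus d}$ as a complex over $k$, one compares $\dim_{l}\ext{R}{n}{l}{R}$ with $[l:k]\cdot\dim_k\ext{R}{n}{k}{R}$. This gives vanishing of $\ext{R}{n}{k}{R}$ for $n\gg0$, i.e.\ $\injdim{R}{R}<\infty$ by the criterion in \Cref{C_ProjDimInjDim}, so $R$ is Gorenstein.

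Justifying this Bass-number comparison is the step I expect to be the main obstacle. I would try to realize it by factoring $\rhom{R}{l}{R}$ through a semifree resolution of $k$ and the finite-length filtration of $l$ over $\h{0}{R}$, using that Ext vanishing in high degrees is preserved along triangles.
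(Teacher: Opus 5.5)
Your Step 1 is correct and differs from the paper's argument only in how it is organized. You combine the Hu--Yang--Zhu characterization of Gorenstein dg-algebras with \Cref{P_FiniteGdimAcrossMorphisms} applied to $l$. The paper instead proves both directions at once from the chain $\rhom{S}{l}{S}\sim\rhom{R}{l}{R}\simeq\bigoplus_{\text{finite}}\rhom{R}{k}{R}$ and the criterion that $\injdim{R}{R}<\infty$ if and only if $\ext{R}{n}{k}{R}=0$ for $n\gg0$.

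Step 2, however, has a genuine gap: you never actually get from $l$ to $k$. You correctly identify the target ($k\in\thick{R}{}{l}$, or the Bass-number comparison), but you leave it open. The repair you propose also goes the wrong way. A finite filtration of $l$ with subquotients $k$ only shows $l\in\thick{R}{}{k}$, so triangles transfer high-degree Ext vanishing (or finite Gorenstein dimension) from $k$ to $l$, not from $l$ to $k$.

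The missing observation is elementary rather than delicate. The dg $R$-module $l$ is concentrated in degree $0$, and $R$ acts on it through the dg-algebra maps $R\to\h{0}{R}\to\h{0}{S}\to l$. Since $\h{0}{R}\to\h{0}{S}$ is local, $\mfm$ acts by zero, so the action factors through the dg-algebra map $R\to k$. Hence $l\cong k^{\oplus d}$ as dg $R$-modules, with $d=[l:k]<\infty$; this is an honest direct sum, not just a filtration. Consequently $k$ is a direct summand of $l$ in $\catd{R}$, so $k\in\thick{R}{}{l}\subseteq\thick{R}{}{\gzero}$ and $\gdim{R}{k}<\infty$, which makes $R$ Gorenstein. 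Equivalently, $\ext{R}{n}{l}{R}\cong\bigl(\ext{R}{n}{k}{R}\bigr)^{\oplus d}$. This is precisely the Bass-number comparison you wanted, and it is the last isomorphism in the paper's proof.
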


\begin{proof}
Let $k$ be the residue field of $\h{0}{R}$ and let $l$ be the residue field of $\h{0}{S}$. Since $S$ is in $\catdbf{R}$, $l$ is a finite dimensional $k$-vector space. This gives the sixth isomorphism below:
\begin{align*}
\rhom{S}{l}{S} \sim \rhom{S}{l}{\rhom{R}{S}{R}} &\simeq \rhom{R}{\ltime{l}{S}{S}}{R} \\
&\simeq \rhom{R}{l}{R} \\
&\simeq \rhom{R}{\ltime{l}{k}{k}}{R} \\
&\simeq \rhom{k}{l}{\rhom{R}{k}{R}} \\
&\simeq \bigoplus_{\text{finite}} \rhom{R}{k}{R}.
\end{align*}
The first isomorphism holds as $\varphi$ is quasi-Gorenstein. It is now clear that $R$ has finite injective dimension over itself if and only if $S$ does.
\end{proof}

\section{Virtually G Small} \label{S_VirtuallyGSmall}
In this section we use a Gorenstein version of virtually small to give a sufficient condition for a morphism of dg-algebras to be quasi-Gorenstein, which is new even for homomorphisms of local rings. Virtually G small complexes were first studied by Dywer, Greenlees, and Iyengar \cite{Dwyer/Greenlees/Iyengar:2006}.
\begin{definition} Let $R$ be a dg-algebra. A dg-module $M\in\catdbf{R}$ is virtually $G$ small if there exists $F_{M}\in\thick{R}{}{M}$ so that $\gdim{R}{F_{M}}<\infty$.
\end{definition}

\begin{chunk} \label{C_KoszulObjectStuff} Let $R$ be a dg-algebra and let $x$ be an element in $\h{0}{R}$. Under the identification 
\[\h{0}{R} \simeq \h{0}{\rhom{R}{R}{R}} \simeq \Hom{\catd{R}}{R}{R},\]
$x$ corresponds to an endomorphism of $R$ in $\catdbf{R}$. Denote the cone of this morphism by $R/\!/x$. In \cite[Section 2.2]{Minamoto:2021a} and \cite[Section 5]{Shaul:2020} it is shown that $R/\!/x$ has the structure of a dg-algebra. If $\underline{x}=x_{1},\dots,x_{n}$ is a sequence in $\h{0}{R}$, then $R/\!/\underline{x}$ is defined inductively as $(R/\!/(x_{1},\dots,x_{n-1}))/\!/x_{n}$. Moreover, we have $\h{0}{R/\!/\underline{x}}\cong \h{0}{R}/\underline{x}$ and that $\h{}{R/\!/\underline{x}}$ is finitely generated over $\h{0}{R/\!/\underline{x}}$. In particular, $\h{}{R/\!/\underline{x}}$ has finite length if $\h{0}{R/\!/\underline{x}}$ is artinian.
\end{chunk}

A dg $R$-module $M$ is in $\catdbfl{R}$ if $M$ is in $\catdbf{R}$ and $\h{}{M}$ has finite length as an $\h{0}{R}$-module.
\begin{remark} Let $R$ be a dg-algebra. Then the following are equivalent
\begin{enumerate}
    \item $R$ is Gorenstein.
    \item Every $M\in\catdbf{R}$ is virtually G small.
    \item Every $M\in\catdbfl{R}$ is virtually G small.
\end{enumerate}
Note that 1 implies 2 by \cite[Theorem 1.2]{Hu/Yang/Zhu:2025}. To see that 3 implies 1, let $\underline{x}$ be a system of parameters for $\h{0}{R}$. By \cite[Lemma 7.12]{Bird/Shaul/Sridhar/Williamson:2025} there exists a dg-module $E$ such that $\rhom{R}{R/\!/\underline{x}}{E}$ has finite injective dimension and so that $\rhom{R}{R/\!/\underline{x}}{E}$ is in $\catdbfl{R}$. By assumption there is a dg-module $F_{M}$ in $\thick{R}{}{\rhom{R}{R/\!/\underline{x}}{E}}$ that has finite Gorenstein dimension. Since $F_{M}$ also has finite injective dimension, $R$ is Gorenstein by \cite[Proposition 3.2]{Soto/Sridhar:2025}.
\end{remark}

\begin{lemma} \label{L_RhomSRwithRhomSR} Let $\varphi\colon R\rightarrow S$ be a morphism of dg-algebras with $\gdim{R}{S}<\infty$. Then there is a isomorphism in $\catd{S}$
\[\rhom{S}{\rhom{R}{S}{R}}{\rhom{R}{S}{R}} \simeq S.\]
\end{lemma}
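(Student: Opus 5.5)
Write $(-)^\dagger=\rhom{R}{-}{R}$. The plan is to use tensor--Hom adjunction to move $\rhom{S}{-}{-}$ back to $R$, and then derived reflexivity of $S$ over $R$, which holds because $\gdim{R}{S}<\infty$.

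The first step is the adjunction isomorphism, valid in $\catd{S}$ by \cite[Proposition 12.10.12]{Yekutieli:2020}, exactly as in the proofs of \Cref{P_PhiPsiPsiGorImpliesPhiGor} and \Cref{P_CompositionQuasiGor}. It gives
\[
\rhom{S}{S^\dagger}{\rhom{R}{S}{R}} \simeq \rhom{R}{\ltime{S^\dagger}{S}{S}}{R} \simeq \rhom{R}{S^\dagger}{R} = S^{\dagger\dagger}.
\]
Here $S^\dagger$ carries its natural $S$-module structure through the first variable. The second step is to note that $\gdim{R}{S}<\infty$ means, by the definition of Gorenstein dimension, that $S$ is derived reflexive over $R$. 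So the biduality morphism $\delta_R^S\colon S\to S^{\dagger\dagger}$ is an isomorphism in $\catd{R}$. Composing the two steps gives the claimed isomorphism with $S$.

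The main obstacle is checking that the composite isomorphism is a morphism in $\catd{S}$, not merely in $\catd{R}$. For this I would use the natural $S$-linear morphism
\[
S\longrightarrow \rhom{S}{S^\dagger}{S^\dagger}
\]
given by the homothety action of $S$ on $S^\dagger$. This morphism is $S$-linear by construction, so it suffices to show it is an isomorphism after restriction to $R$. Under the adjunction isomorphism of the first step, it should correspond to $\delta_R^S$. I would verify this compatibility at the level of semi-free or K-projective resolutions, where both maps are given by the same evaluation formula $s\mapsto(f\mapsto f(s\cdot -))$.

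If tracking the map explicitly becomes messy, I would fall back on the homology argument used in \Cref{L_checkwhenMhasfiniteGdim}. Any morphism in $\catd{S}$ whose underlying map on homology is bijective is an isomorphism by \cite[Corollary 7.2.13]{Yekutieli:2020}. Bijectivity on homology can be read off from the restriction to $\catd{R}$, where the map agrees with $\delta_R^S$ up to the adjunction isomorphism, and $\delta_R^S$ is an isomorphism by the second step.
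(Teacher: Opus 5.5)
Your proposal is correct and follows essentially the same route as the paper. Both use the adjunction isomorphism of \cite[Proposition 12.10.12]{Yekutieli:2020} to reduce the left side to $\rhom{R}{\rhom{R}{S}{R}}{R}$ (the paper writes this as $\rhom{S}{S}{\rhom{R}{\rhom{R}{S}{R}}{R}}$) and then apply derived reflexivity of $S$ over $R$. The only difference is how $S$-linearity is handled: the paper cites the reasoning of \cite[Proposition 12.3.19]{Christensen/Foxby/Holm:2024}, while you make it explicit by matching the homothety $S\to\rhom{S}{S^\dagger}{S^\dagger}$ with $\delta_R^S$ under adjunction, which is a routine check using a K-injective resolution of $R$.
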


\begin{proof}
Just note that we have
\begin{align*}
\rhom{S}{\rhom{R}{S}{R}}{\rhom{R}{S}{R}} &\simeq \rhom{R}{\rhom{R}{S}{R}\ltime{}{S}{S}}{R} \\
&\simeq \rhom{S}{S}{\rhom{R}{\rhom{R}{S}{R}}{R}} \\
&\simeq \rhom{S}{S}{S} \\
&\simeq S,
\end{align*}
where the first and second isomorphisms are in $\catd{S}$ by \cite[Proposition 12.10.12]{Yekutieli:2020}, and the third isomorphism holds by the same reasoning in \cite[Proposition 12.3.19]{Christensen/Foxby/Holm:2024} since $S$ is derived reflexive over $R$.
\end{proof}

The following was proved in \cite[Theorem 6.2]{Dwyer/Greenlees/Iyengar:2006} when $R$ is a noetherian local ring.
\begin{proposition} \label{P_DwyerGreenleesIyengarTheorem} Let $R$ be a dg-algebra with $R_{0}$ a noetherian local ring and let $M\in\catdbf{R}$ be a dg-module. If $\rhom{R}{M}{M}$ has finite projective dimension and $M$ is virtually $G$ small, then $M$ has finite projective dimension. Moreover, if $\rhom{R}{M}{M}$ and $R$ are isomorphic, then $M$ is isomorphic to a shift of $R$.
\end{proposition}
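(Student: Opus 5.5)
The plan is to adapt the argument of Dwyer, Greenlees and Iyengar, replacing ``proxy small'' by ``virtually $G$ small''. The key input is that finite Gorenstein dimension lets us dualize with $(-)^\dagger=\rhom{R}{-}{R}$ while controlling homology. Let $F=F_M\in\thick{R}{}{M}$ be nonzero with $\gdim{R}{F}<\infty$. Such an $F$ can be taken nonzero: if $M\neq 0$ one can arrange this, for instance via a Koszul object $M/\!/\underline{x}$, and if $M=0$ the statement is trivial.

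\textbf{Finite projective dimension.} Set $E=\rhom{R}{M}{M}$. Since $F\in\thick{R}{}{M}$, the dg-module $\rhom{R}{M}{F}$ lies in $\thick{E^{op}}{}{E}$ as a right $E$-module, and there is the standard isomorphism $\ltime{\rhom{R}{M}{F}}{E}{M}\simeq F$, exactly as in \cite[Section 4]{Dwyer/Greenlees/Iyengar:2006}. I would rather use the condition that $E$ has finite projective dimension over $R$ via Tor against $k$. The goal is to show $\tor{R}{n}{k}{M}=0$ for $n\gg0$, which is equivalent to finite projective dimension by \ref{C_ProjDimInjDim}. Compute
\[\ltime{k}{R}{E}\simeq \ltime{k}{R}{\rhom{R}{M}{M}},\]
which should be compared with $\rhom{R}{M}{\ltime{k}{R}{M}}$. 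The cleanest route is to use $F$. Because $\gdim{R}{F}<\infty$, $F$ is derived reflexive and $F^\dagger\in\catdbf{R}$, so $\rhom{R}{F}{X}\simeq \ltime{F^\dagger}{R}{X}$ for $X$ in $\thick{R}{}{R}$. Apply this with $X=E$ built from $R$: $\rhom{R}{F}{E}$ is bounded, and through adjunction this controls $\rhom{R}{\ltime{F}{R}{M}}{M}$. Since $k\in\thick{R}{}{\ltime{F}{R}{M}}$ is not available directly, I would instead proceed as follows. $\thick{R}{}{M}$ contains $F$, and $\ltime{k}{R}{-}$ applied to $F$ has bounded homology, because finite G-dimension modules have $\ltime{k}{R}{F}\simeq \rhom{R}{F^\dagger}{k}$, which is bounded below and degreewise finite. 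Then $\ltime{k}{R}{M}$ inherits a module structure over $\ltime{k}{R}{E}$, and that is bounded. A nonzero dg-module over a dg-algebra with $\ltime{k}{R}{E}$ finite dimensional yields, by the DGI argument \cite[Theorem 6.2]{Dwyer/Greenlees/Iyengar:2006}, that $\ltime{k}{R}{M}$ is a finitely built module over $\ltime{k}{R}{E}$ and hence is bounded. This gives finite projective dimension.

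\textbf{The main obstacle.} The hardest step is the transfer of boundedness through $\ltime{k}{R}{E}$. I expect to need that $\rhom{R}{M}{F}$ is a nonzero perfect $E$-module and that $\ltime{k}{R}{F}$ is bounded in the relevant direction. In the ring case this boundedness comes from Tor-rigidity or from $k$ being built from $F$. In our setting I would first try to replace $F$ by a Gorenstein projective in $\gzero$ and use that $\ltime{k}{R}{G}$ sits in a totally acyclic resolution.

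\textbf{The moreover part.} If $E\simeq R$, then $M$ is perfect by the first part. For perfect $M$ over a local dg-algebra, Poincaré series are multiplicative: $\pseries{E}{R}=\pseries{M^\ast}{R}\pseries{M}{R}$ with $M^\ast=\rhom{R}{M}{R}$. Since $\pseries{E}{R}=1$, both series are monomials $t^{d}$. Hence $M$ has a minimal semifree resolution of rank one, so $M\sim R$.
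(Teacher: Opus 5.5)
Your ``moreover'' part is correct. It is exactly the step the paper singles out: once $M$ is perfect, $\rhom{R}{M}{M}\simeq\ltime{M^{\dagger}}{R}{M}$, Poincaré series multiply, so $\pseries{M}{R}=t^{a}$, and a minimal semifree resolution (\cite[Proposition B.7 and Corollary B.8.1]{Avramov/Iyengar/Nasseh/SatherWagstaff:2019}) gives $M\simeq\Sigma^{a}R$. Beyond that step, the paper only asserts that the proof of \cite[Theorem 6.2]{Dwyer/Greenlees/Iyengar:2006} carries over.

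That step presupposes the first part, and the first part is not proved. The claim carrying the weight, that $\ltime{k}{R}{F}$ is bounded because $\gdim{R}{F}<\infty$, is false. By \ref{C_ProjDimInjDim}, boundedness of $\ltime{k}{R}{F}$ is equivalent to $\projdim{R}{F}<\infty$. For instance, $F=k$ over a non-regular Gorenstein local ring has finite Gorenstein dimension, yet $\tor{R}{n}{k}{k}\neq 0$ for all $n$. The isomorphism $\ltime{k}{R}{F}\simeq\rhom{R}{F^{\dagger}}{k}$ is also unavailable in general, and ``bounded below'' is not ``bounded''. The next claim, that $\ltime{k}{R}{M}$ is finitely built over $\ltime{k}{R}{E}$ and hence bounded, has no argument behind it, as your ``main obstacle'' paragraph concedes.

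Separately, your way of making $F$ nonzero does not work. $M/\!/\underline{x}$ lies in $\thick{R}{}{M}$, but there is no reason for it to have finite Gorenstein dimension unless $R$ is Gorenstein. Nonvanishing of $F_M$ must be part of the definition, as in Dwyer--Greenlees--Iyengar. Otherwise a dualizing complex $D$ of a non-Gorenstein local ring would contradict the statement: it satisfies $\rhom{R}{D}{D}\simeq R$ but is not a shift of $R$.

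The missing idea is to hom \emph{out of} $M$ into $F$ and use the derived reflexivity of $F$ through Bass series. You already wrote down the right object. Set $X:=\rhom{R}{M}{F}$; since $F\in\thick{R}{}{M}$, it lies in $\thick{R}{}{E}\subseteq\thick{R}{}{R}$. Three identities then apply:
\begin{enumerate}
\item For $M,N\in\catdbf{R}$, $\bseries{\rhom{R}{M}{N}}{R}=\pseries{M}{R}\,\bseries{N}{R}$, by adjunction and because $\rhom{R}{k}{N}$ is represented by a dg $k$-module.
\item For perfect $X$, $\bseries{X}{R}=\bseries{R}{R}\,\mathsf{P}^{R}_{X}(t^{-1})$.
\item Since $F\simeq\rhom{R}{F^{\dagger}}{R}$, the first identity gives $\bseries{F}{R}=\pseries{F^{\dagger}}{R}\,\bseries{R}{R}$.
\end{enumerate}
Hence, in $\mathbb{Z}((t))$,
\[\bseries{R}{R}\,\mathsf{P}^{R}_{X}(t^{-1})=\bseries{X}{R}=\pseries{M}{R}\,\pseries{F^{\dagger}}{R}\,\bseries{R}{R}.\]
Cancel $\bseries{R}{R}\neq 0$ in the domain $\mathbb{Z}((t))$; this shows $\pseries{M}{R}\,\pseries{F^{\dagger}}{R}$ is a Laurent polynomial. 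We have $F^{\dagger}\neq 0$, since $F\simeq F^{\dagger\dagger}\neq 0$, and all coefficients are nonnegative. So $\pseries{M}{R}$ is itself a Laurent polynomial, i.e.\ $\projdim{R}{M}<\infty$.

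The direction of Hom matters. For perfect $F$ one could use $\rhom{R}{F}{M}\simeq\ltime{F^{\dagger}}{R}{M}$. For $F$ of merely finite Gorenstein dimension that tensor evaluation fails, and $\rhom{R}{F}{M}$ only controls $\bseries{M}{R}$.
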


\begin{proof}
This result was proved in \cite[Theorem 6.2]{Dwyer/Greenlees/Iyengar:2006} when $R$ is a noetherian local ring. The exact same proof works in the dg-setting. One only needs to note that it is by \cite[Proposition B.7 and Corollary B.8.1]{Avramov/Iyengar/Nasseh/SatherWagstaff:2019} that having the equality $\pseries{M}{R}=t^{a}$ for some $a$ implies that $M$ is isomorphic to a shift of $R$; this is needed in the last paragraph of the proof of \cite[Theorem 6.2]{Dwyer/Greenlees/Iyengar:2006}.
\end{proof}

The following is new even when both $R$ and $S$ are rings. 
\begin{theorem} \label{T_GvirtuallysmallnessascendsimpliesquasiGor}
Let $\varphi\colon R\rightarrow S$ be a morphism of dg-algebras with $S_{0}$ a noetherian local ring and $\gdim{R}{S}<\infty$. If every $M\in \catdbfl{S}$ that has finite Gorenstein dimension over $R$ is virtually $G$ small over $S$, then $\varphi$ is quasi-Gorenstein. 
\end{theorem}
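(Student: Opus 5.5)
Set $D=\rhom{R}{S}{R}$, viewed as a dg $S$-module. The plan is to show that $D$ itself satisfies the hypotheses of \Cref{P_DwyerGreenleesIyengarTheorem} over $S$, and then conclude that $D$ is a shift of $S$. By \Cref{L_RhomSRwithRhomSR} we already have $\rhom{S}{D}{D}\simeq S$. This is exactly the "moreover" hypothesis of \Cref{P_DwyerGreenleesIyengarTheorem}, and $S_0$ is noetherian local by assumption. So the remaining task is to show that $D$ is virtually $G$ small over $S$.

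We cannot apply the hypothesis directly to $D$, since $D$ need not have finite length homology. First, note that $D\in\catdbf{S}$: since $\gdim{R}{S}<\infty$, $S$ is derived reflexive over $R$, so $D$ lies in $\catdbf{R}$, and hence its homology is finitely generated over $\h{0}{S}$. Also, $D$ has finite Gorenstein dimension over $R$, because the dual of a derived reflexive module is derived reflexive, with $\rhom{R}{D}{R}\simeq S$.

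The key step is to pass to a finite-length Koszul object. Let $\underline{x}$ be a system of parameters of $\h{0}{S}$ (lifted to $S_0$), and set $M=D\otimes^{\mathsf L}_S S/\!/\underline{x}$. By \Cref{C_KoszulObjectStuff}, $M$ lies in $\catdbfl{S}$, and $M\in\thick{S}{}{D}$, being built from $D$ by iterated cones of multiplication maps. Over $R$, the same cones show that $M\in\thick{R}{}{D}$. Since finite Gorenstein dimension over $R$ is a thick condition (by \cite[Theorem 1.2]{Hu/Yang/Zhu:2025}, it means membership in $\thick{R}{}{\gzero}$), $M$ has finite Gorenstein dimension over $R$. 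The hypothesis then makes $M$ virtually $G$ small over $S$: there is $F\in\thick{S}{}{M}\subseteq\thick{S}{}{D}$ with $\gdim{S}{F}<\infty$. If $M\neq0$, then $F$ can be taken nonzero, as the definition intends. Hence $D$ is virtually $G$ small over $S$.

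Now apply \Cref{P_DwyerGreenleesIyengarTheorem}. It gives that $D$ is a shift of $S$ in $\catd{S}$, so $\varphi$ is quasi-Gorenstein by definition. The step I expect to be the main obstacle is checking that the Dwyer–Greenlees–Iyengar argument needs only a nonzero $F$ in the thick closure. We also need $M\neq0$, which holds by Nakayama since $D\neq0$: $S$ is nonzero and reflexive. Finally, we need the nontrivial $F$ to interact with $D$ in the way that argument uses, namely via $\rhom{S}{D}{F}$ and the Poincaré series computation $\pseries{D}{S}=t^a$. If that transfer causes trouble, I would instead invoke the Gorenstein-dimension duality on $F$ so that the argument runs with $\rhom{S}{F}{S}$ in place of a perfect object.
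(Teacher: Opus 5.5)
Your argument is correct, and it is shorter than the paper's.

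\textbf{How the paper proceeds.} The paper also feeds the hypothesis a Koszul-type finite-length object, namely $\rhom{R}{E}{R}$ with $E=S/\!/\underline{x}$. By the isomorphisms at the end of its proof, this object is $\Sigma^{b}(\ltime{E}{S}{D})$, which is a shift of your $M$. The paper then needs several further stages:
\begin{enumerate}
\item \Cref{P_DwyerGreenleesIyengarTheorem} over $S$ gives only finite projective dimension, since $\rhom{S}{\rhom{R}{E}{R}}{\rhom{R}{E}{R}}\simeq\rhom{S}{E}{E}$ is perfect but not $S$.
\item Quasi-Gorensteinness of $S\to E$ and \Cref{P_FiniteGdimAcrossMorphisms} move this to finite Gorenstein dimension over $E$.
\item A second application of \Cref{P_DwyerGreenleesIyengarTheorem}, over $E$ and combined with \Cref{L_RhomSRwithRhomSR} for $R\to E$, shows that $R\to E$ is quasi-Gorenstein.
\item A Poincar\'e series factorization descends this conclusion to $\varphi$.
\end{enumerate}

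\textbf{What your route changes.} Your observation that $\thick{S}{}{M}\subseteq\thick{S}{}{D}$ transfers virtual $G$-smallness from $M$ to $D$ itself. For $D$, \Cref{L_RhomSRwithRhomSR} already gives $\rhom{S}{D}{D}\simeq S$, so one application of \Cref{P_DwyerGreenleesIyengarTheorem} finishes. The detour through $E$ and the descent step disappear. The paper's route additionally shows that $R\to S/\!/\underline{x}$ is quasi-Gorenstein. Otherwise it uses \Cref{P_DwyerGreenleesIyengarTheorem} in the same generality you do, with an arbitrary nonzero $F$ of finite Gorenstein dimension in the thick closure. So your proof needs nothing the paper's does not.

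\textbf{Your closing worry.} It concerns the proof of \Cref{P_DwyerGreenleesIyengarTheorem}, not your argument. As stated, that proposition applies verbatim to $D\in\catdbf{S}$ with $S_0$ noetherian local, so no fallback via duality on $F$ is needed. Both arguments stand or fall with that proposition.

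\textbf{One small correction.} Finite length of $\h{}{M}$ is not what \Cref{C_KoszulObjectStuff} says, since that chunk concerns $S/\!/\underline{x}$ itself. It follows instead because each $\h{i}{M}$ is finitely generated over $\h{0}{S}$ and, by the long exact sequences of the iterated cones, is killed by a power of $(\underline{x})$.
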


\begin{remark} Letting $\varphi\colon R\rightarrow S$ be a module finite local homomorphism of noetherian local rings with $\gdim{R}{S}<\infty$, a consequence of \cite[Theorem 6.7]{Dwyer/Greenlees/Iyengar:2006} and the proof of \cite[Theorem 9.12]{Dwyer/Greenlees/Iyengar:2006} is that if every $M\in \catdbf{S}$ that is virtually $G$ small over $R$ is also virtually $G$ small over $S$, then $\varphi$ is quasi-Gorenstein.
\end{remark}

\begin{proof}[Proof of \Cref{T_GvirtuallysmallnessascendsimpliesquasiGor}]
Let $\underline{x}$ be a system of parameters for $\h{0}{S}$ and let $E=S/\!/\underline{x}$. Then $E$ is a dg-algebra as in \Cref{S_dgAlgebraSetup} and $\h{0}{E}$ is artinian, see \Cref{C_KoszulObjectStuff}. Consider the morphisms of dg-algebras $R\xrightarrow{\varphi} S\xrightarrow{f}E$. Since
\[E\in\thick{S}{}{S}\subseteq \thick{R}{}{S},\]
we have $\gdim{R}{E}<\infty$ by \cite[Proposition 3.1]{Hu/Yang/Zhu:2025}, and so $\gdim{R}{\rhom{R}{E}{R}}<\infty$ by \Cref{L_checkwhenMhasfiniteGdim}. Since
\[\rhom{R}{E}{R} \simeq \rhom{R}{\ltime{E}{S}{S}}{R} \simeq \rhom{S}{E}{\rhom{R}{S}{R}},\]
$\rhom{R}{E}{R}$ is in $\catdbfl{S}$ by \cite[2.1.2 and Proposition 2.4]{Minamoto:2021a}, and so $\gdim{R}{\rhom{R}{E}{R}}$ being finite implies $\rhom{R}{E}{R}$ is virtually G small over $S$ by assumption. Now the following isomorphisms hold in $\catd{S}$
\begin{align*}
\rhom{S}{\rhom{R}{E}{R}}{\rhom{R}{E}{R}} &\simeq \rhom{R}{\rhom{R}{E}{R}\ltime{}{S}{E}}{R} \\ 
&\simeq \rhom{S}{E}{\rhom{R}{\rhom{R}{E}{R}}{R}} \\
&\simeq \rhom{S}{E}{E} \in \thick{S}{}{S},
\end{align*}
where the first and second isomorphisms are by \cite[Proposition 12.10.12]{Yekutieli:2020}, and the third isomorphism holds by the same reasoning as in \cite[Proposition 12.3.19]{Christensen/Foxby/Holm:2024} since $E$ is derived reflexive over $R$. This implies that $\projdim{S}{\rhom{R}{E}{R}}<\infty$ by \Cref{P_DwyerGreenleesIyengarTheorem}, and so $\gdim{S}{\rhom{R}{E}{R}}<\infty$ by \cite[Proposition 4.1]{Hu/Yang/Zhu:2025}. Since $\rhom{S}{E}{S}$ is isomorphic to a shift of $E$ and since $\gdim{S}{E}<\infty$, $f$ is quasi-Gorenstein, and so $\gdim{E}{\rhom{R}{E}{R}}<\infty$ by \Cref{P_FiniteGdimAcrossMorphisms}. Now by \Cref{L_RhomSRwithRhomSR} we have an isomorphism
\[\rhom{E}{\rhom{R}{E}{R}}{\rhom{R}{E}{R}} \simeq E,\]
and so $\rhom{R}{E}{R}$ is isomorphic to a shift of $E$ by \Cref{P_DwyerGreenleesIyengarTheorem}. This implies $f\varphi$ is quasi-Gorenstein, giving the first isomorphism below for some $a$ 
\begin{align*}
\Sigma^{a}E \simeq \rhom{R}{E}{R} \simeq \rhom{R}{\ltime{E}{S}{S}}{R} &\simeq \rhom{S}{E}{\rhom{R}{S}{R}} \\
&\simeq \rhom{S}{E}{S}\otimes_{S}^{\mathsf{L}} \rhom{R}{S}{R} \\
&\simeq \Sigma^{b}E\otimes_{S}^{\mathsf{L}}\rhom{R}{S}{R}
\end{align*}
for some $b$, where the fourth isomorphism is by \cite[Lemma 2.7]{Bird/Shaul/Sridhar/Williamson:2025}. Since $\gdim{R}{S}<\infty$, $\rhom{R}{S}{R}$ is in $\catdbf{S}$, and so by \cite[Proposition B.7]{Avramov/Iyengar/Nasseh/SatherWagstaff:2019} we can take a minimal semifree resolution $F$ of $\rhom{R}{S}{R}$ over $S$. We now have $\Sigma^{a}E \simeq \Sigma^{b}E\otimes_{S}F$, and the proof of \cite[Lemma 1.5.3.a]{Avramov/Foxby:1997} gives $\pseries{\Sigma^{a}E}{S} = \pseries{\Sigma^{b}E}{S}\pseries{F}{S}$ which implies $\pseries{F}{S}=t^{n}$ for some $n$. By \cite[Corollary B.8.1]{Avramov/Iyengar/Nasseh/SatherWagstaff:2019} this forces $F$ to be isomorphic to a shift of $S$, and so $\rhom{R}{S}{R}$ is isomorphic to a shift of $S$, which means $\varphi$ is quasi-Gorenstein.
\end{proof}

\section{The canonical augmentation of the Koszul complex}
In this section, for a dg-algebra $R$, we study the quasi-Gorensteinness of the canonical augmentation $R\rightarrow\h{0}{R}$, focusing on when $R$ is a Koszul complex.

\begin{remark} \label{R_NegativeGorDim} Note that for a dg-algebra $R$, if the canonical augmentation $R \to \HR$ is quasi-Gorenstein, then $\amp\rhom{R}{\HR}{R} = 0$. Since $\sup\rhom{R}{\HR}{R} = \sup R$ by \cite[Proposition 3.3]{Shaul:2018}, we have that $\inf\rhom{R}{\HR}{R} = \sup R$. Thus, by the definition of Gorenstein dimension, we have that $\gdim{R}{\HR} = -\sup R$. This unintuitively implies that the Gorenstein dimension of $\HR$ with respect to $R$ is always nonpositive if it is finite.
\end{remark}

It is not always the case that $R$ and $\h{0}{R}$ are simultaneously Gorenstein, and so by \Cref{P_RGorAndSGorSimultaneously} it is not always the case that the canonical augmentation is quasi-Gorenstein. A consequence of \cite[Proposition 5]{Shaul:2022} is that it is always possible to write down a dg-algebra $R$ that is not Gorenstein such that $\h{0}{R}$ is Gorenstein. The next example shows that it is possible for a dg-algebra $R$ to be Gorenstein without $\h{0}{R}$ being Gorenstein.
\begin{example} Let $A=k[[x,y]]$ where $k$ is a field and let $R$ be the Koszul complex on $x^{2},xy$. Then $A$ is Gorenstein, and so $R$ is Gorenstein by \cite[Theorem 4.9]{Frankild/Jorgensen:2003}, but $\h{0}{R}=A/(x^{2},xy)$ is not Gorenstein. Now by \Cref{P_RGorAndSGorSimultaneously} $R\rightarrow \h{0}{R}$ is not quasi-Gorenstein.
\end{example}

The following gives an example where the canonical augmentation is quasi-Gorenstein.
\begin{example} Let $A=k[[x,y]]/(xy)$ where $k$ is a field. Let $R$ be the Koszul complex on $x$, and consider the natural morphisms $A\xrightarrow{\psi} R\xrightarrow{\varphi} \h{0}{R}$. Since both $A$ and $\h{0}{R}$ are Gorenstein, $\varphi\psi$ is quasi-Gorenstein by \cite[7.7.2]{Avramov/Foxby:1997}. Also, $\psi$ is quasi-Gorenstein by \cite[Lemma 4.8]{Frankild/Jorgensen:2003}, and so $\varphi$ is quasi-Gorenstein by \Cref{P_PhiPsiPsiGorImpliesPhiGor}.
\end{example}

\begin{lemma} \label{L_TopAndBottomHomology} Let $R$ be a dg-algebra. If $R\rightarrow \h{0}{R}$ is quasi-Gorenstein, then $\h{0}{R}$ and $\h{\sup{R}}{R}$ are isomorphic as $\h{0}{R}$-modules.
\end{lemma}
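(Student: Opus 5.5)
Write $s=\sup R$ and $H=\HR$. The plan is to compute the single nonvanishing homology module of $\rhom{R}{H}{R}$ in two ways: once from the quasi-Gorenstein hypothesis, and once directly from the truncation structure of $R$. Comparing the two answers as $H$-modules gives the isomorphism.

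\emph{Step 1: use the hypothesis.} Since $R\to H$ is quasi-Gorenstein, there is an isomorphism $\rhom{R}{H}{R}\simeq \Sigma^{m}H$ in $\catd{H}$ for some $m$. By \Cref{R_NegativeGorDim} the amplitude of $\rhom{R}{H}{R}$ is $0$ and its infimum is $s$, so $m=s$. Taking homology, $\h{s}{\rhom{R}{H}{R}}\cong H$ as $H$-modules. Because $\catd{H}$ here is the derived category of the ring $H$, this identification is one of $H$-modules.

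\emph{Step 2: compute the top homology directly.} I will show $\h{s}{\rhom{R}{H}{R}}\cong \Hom{H}{H}{\h{s}{R}}\cong \h{s}{R}$, naturally as $H$-modules. This is the content of \cite[Proposition 3.3]{Shaul:2018}, which gave $\sup\rhom{R}{H}{R}=\sup R$; I expect its proof to identify the top homology as well. If that has to be reproved, the argument is as follows. Replace $R$ by a semifree resolution and use the truncation $R\to \tau_{\geq s}R\simeq \Sigma^{s}\h{s}{R}$, together with the triangle $\tau_{\geq s}R\to R\to \tau_{<s}R\to$. Here $\tau_{<s}R$ has homology concentrated in degrees $<s$. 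Since $H$ is concentrated in degree $0$, $\rhom{R}{H}{\tau_{<s}R}$ has homology only in degrees $<s$. Applying $\rhom{R}{H}{-}$ to the triangle therefore gives
\[
\h{s}{\rhom{R}{H}{R}}\cong \h{s}{\rhom{R}{H}{\Sigma^{s}\h{s}{R}}}=\h{0}{\rhom{R}{H}{\h{s}{R}}}.
\]
The last group is $\Hom{\catd{R}}{H}{\h{s}{R}}$. Both $H$ and $\h{s}{R}$ are dg $R$-modules concentrated in degree $0$, and for such modules this Hom-set is $\Hom{H}{H}{\h{s}{R}}$. One way to see this is to take a semifree resolution $P\to H$ that is an isomorphism in degree $0$, namely $P_0=R_0$ modulo boundaries; maps into a module concentrated in degree $0$ then factor through $\h{0}{P}=H$. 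Finally, $\Hom{H}{H}{\h{s}{R}}\cong\h{s}{R}$.

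\emph{Step 3: combine.} Steps 1 and 2 together give $H\cong\h{s}{R}$ as $H$-modules.

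The main obstacle is keeping the module structures aligned. The quasi-Gorenstein isomorphism lives in $\catd{H}$, while the computation in Step 2 uses the $R$-action. I would handle this by checking that the identification in Step 2 is natural in the first argument. The $H$-action on $\rhom{R}{H}{R}$ through the source $H$ then corresponds to the evident $H$-action on $\Hom{H}{H}{\h{s}{R}}$, which is the $H$-module structure of $\h{s}{R}$ coming from $R_0$.
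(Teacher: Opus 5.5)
Your proof is correct and is essentially the paper's argument. The paper likewise combines the quasi-Gorenstein isomorphism $\rhom{R}{\HR}{R}\sim\HR$ with \cite[Proposition 3.3]{Shaul:2018}, which gives $\h{\sup R}{\rhom{R}{\HR}{R}}\cong\h{\sup R}{R}$; your truncation argument is a valid self-contained proof of that cited fact (the map should read $\tau_{\geq s}R\to R$, as in your triangle, rather than $R\to\tau_{\geq s}R$).
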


\begin{proof}
The morphism $R\rightarrow \h{0}{R}$ being quasi-Gorenstein gives an isomorphism between $\rhom{R}{\h{0}{R}}{R}$ and a shift of $\h{0}{R}$, and \cite[Proposition 3.3]{Shaul:2018} gives an isomorphism
\[\h{\sup{R}}{\rhom{R}{\h{0}{R}}{R}}\cong \h{\sup{R}}{R},\]
and so $\h{0}{R}$ and $\h{\sup{R}}{R}$ must be isomorphic.
\end{proof}

\begin{proposition} \label{P_QuasiGorAnnihilator} Let $A$ be a noetherian local ring, let $\underline{x}=x_{1},\dots,x_{n}$ be a sequence in $A$, let $R$ be the Koszul complex on $\underline{x}$, and assume $\ann{A}{(\underline{x})}\neq 0$. If $R\rightarrow \h{0}{R}$ is quasi-Gorenstein, then $\ann{A}{(\underline{x})} \cong A/(\underline{x})$. Moreover, we have $\ann{A}\left({\ann{A}{(\underline{x})}}\right) = (\underline{x})$, $\gdim{R}{\HR}=-n$, and $\gdim{A}{A/(\underline{x})}=0$.
\end{proposition}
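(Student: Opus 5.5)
The plan is to compute the top homology of the Koszul complex and then apply \Cref{L_TopAndBottomHomology}. For the Koszul complex $R$ on $\underline{x}=x_{1},\dots,x_{n}$ we have $\h{n}{R}\cong \ann{A}{(\underline{x})}$: the degree $n$ component is $A$, the top differential is trivial, and the cycles in degree $n$ are exactly the elements of $A$ killed by every $x_{i}$, since $\partial(a\, e_{1}\wedge\cdots\wedge e_{n})=\sum \pm a x_{i}\, e_{1}\wedge\cdots\widehat{e_{i}}\cdots\wedge e_{n}$. Because $\ann{A}{(\underline{x})}\neq 0$, this gives $\sup R=n$. \Cref{L_TopAndBottomHomology} then yields $\ann{A}{(\underline{x})}\cong \h{n}{R}\cong \h{0}{R}=A/(\underline{x})$ as $A/(\underline{x})$-modules, and hence as $A$-modules. \Cref{R_NegativeGorDim} gives $\gdim{R}{\HR}=-\sup R=-n$ directly.

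For the double annihilator, let $I=(\underline{x})$ and $J=\ann{A}{I}$. Since $J\cong A/I$ as $A$-modules, their annihilators agree, so $\ann{A}{J}=\ann{A}{(A/I)}=I$.

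For $\gdim{A}{A/(\underline{x})}=0$, I would use the composition $A\xrightarrow{\psi}R\xrightarrow{\varphi}\HR$. Since $R$ is finite free over $A$, we can compute
\[\rhom{A}{\HR}{A}\simeq \rhom{R}{\HR}{\rhom{A}{R}{A}}\simeq \rhom{R}{\HR}{\Sigma^{n}R}\]
using adjunction and the self-duality $\rhom{A}{R}{A}\simeq\Sigma^{-n}R$ of the Koszul complex (this is what \cite[Lemma 4.8]{Frankild/Jorgensen:2003} says: $\psi$ is quasi-Gorenstein). By hypothesis $\rhom{R}{\HR}{R}\simeq \Sigma^{m}\HR$, and by \Cref{R_NegativeGorDim} the shift is such that the homology sits in degree $\sup R=n$. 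Combined with the $\Sigma^{-n}$ from the Koszul duality, $\rhom{A}{A/I}{A}$ is then concentrated in degree $0$ and isomorphic to $A/I$. Thus $\varphi\psi$ is quasi-Gorenstein. By \Cref{P_CompositionQuasiGor} this needs $\gdim{R}{\HR}<\infty$, which is part of the definition of quasi-Gorenstein. By \Cref{P_FiniteGdimAcrossMorphisms}, $\gdim{A}{\HR}=\gdim{A}{R}+\gdim{R}{\HR}$. Here $\gdim{A}{R}=-\inf\rhom{A}{R}{A}=n$, since $R$ is free with top term in degree $n$. Therefore $\gdim{A}{A/I}=n+(-n)=0$.

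The main obstacle is keeping the shift conventions consistent between the various duals. I would check the sign by the direct computation that $\rhom{A}{A/I}{A}$ has homology only in degree $0$, namely $\Hom{A}{A/I}{A}=J\cong A/I$. This also confirms the answer independently, since finite G-dimension together with $\inf\rhom{A}{A/I}{A}=0$ gives G-dimension $0$.
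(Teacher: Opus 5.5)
Your proposal is correct and follows the paper's proof essentially step for step: you use $\h{n}{R}\cong\ann{A}{(\underline{x})}$ together with \Cref{L_TopAndBottomHomology}, then \Cref{R_NegativeGorDim} to get $\gdim{R}{\HR}=-n$, then \Cref{P_FiniteGdimAcrossMorphisms} for the quasi-Gorenstein map $A\to R$ with $\gdim{A}{R}=n$, and your direct check that $\rhom{A}{A/(\underline{x})}{A}\simeq A/(\underline{x})$ is a harmless extra. One small slip: the display should read $\rhom{R}{\HR}{\Sigma^{-n}R}$, matching the self-duality $\rhom{A}{R}{A}\simeq\Sigma^{-n}R$ you cite; this is the shift your subsequent bookkeeping actually uses.
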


\begin{proof}
Since $\h{n}{R} \cong \ann{A}{(\underline{x})} \neq 0$, we have the equality $\sup{R}=n$, and so we have 
\[\ann{A}{(\underline{x})} \cong \h{n}{R} \cong \h{\sup{R}}{R} \cong \h{0}{R} \cong A/(\underline{x}),\]
where the third isomorphism is by \Cref{L_TopAndBottomHomology}.

Moreover, the isomorphism $\ann{A}{(\underline{x})} \cong A/(\underline{x})$ gives an equality $\ann{A}\left({\ann{A}{(\underline{x})}}\right) = (\underline{x})$ and \Cref{R_NegativeGorDim} gives the first equality below
\[\gdim{R}{\HR} = -\sup{R} = -n.\]
Finally, \Cref{P_FiniteGdimAcrossMorphisms} gives the first equality below
\[\gdim{A}{\HR} = \gdim{A}{R} + \gdim{R}{\HR} = n - n =0,\]
finishing the proof.
\end{proof}

\begin{remark}
The converse of \Cref{P_QuasiGorAnnihilator} is false. Take $A = k[[x, y, z]]/(xy, xz)$ and consider the sequence $y, z$. We have that $\ann{A}{(y, z)} = (x)$ and that $A/(y, z) \cong (x)$ as $A$-modules, and so $\ann{A}{(y, z)} \cong A/(y, z)$. However, $A$ is not Gorenstein and $A/(y, z)$ is Gorenstein, and so the map $A \to A/(y, z)$ cannot be quasi-Gorenstein. Thus, the map from the Koszul complex $R$ on $(y, z)$ to $\h{0}{R}$ cannot be quasi-Gorenstein by \Cref{P_CompositionQuasiGor}. 
\end{remark}

\begin{remark} Let $A$ be an artinian local ring that is not Gorenstein. Then there exists an ideal $I$ such that $\ann{A}\left({\ann{A}{I}}\right)\neq I$, see \cite[Exercise 3.2.15]{Bruns/Herzog:1993}. Let $R$ be the Koszul complex on a generating set for $I$. Since $A$ is artinian, every nonzero ideal has a nonzero annihilator, and so $R\rightarrow \h{0}{R}$ is not quasi-Gorenstein by \Cref{P_QuasiGorAnnihilator}. This gives examples where the canonical augmentation of the Koszul complex is not quasi-Gorenstein.
\end{remark}

Below we recall the definition of an exact sequence, see \cite{Dibaei/Gheibi:ExactZeroDivisors, Henriques/Sega:2011, Kielpinski/Simson/Tyc:1978}.
\begin{chunk} \label{C_ExactZeroDivisor} Let $A$ be a noetherian local ring. An element $x\in A$ is an exact zero divisor if it satisfies
\[A \neq \ann{A}{(x)} \cong A/(x) \neq 0.\]
This is equivalent to saying that there is a nonzero element $y \in A$ such that $\ann{A}{x}=(y)$ and $\ann{A}{y}=(x)$. A sequence $x_{1},\dots,x_{n}$ is a sequence of exact zero divisors if $x_{i}$ is an exact zero divisor on $A/(x_{1},\dots,x_{i-1})$ for $1\leq i\leq n$. The element $x$ is called exact if $x$ is $A$-regular or is an exact zero divisor. A sequence $x_{1},\dots,x_{n}$ is called exact if $x_{i}$ is exact on $A/(x_{1},\dots,x_{i-1})$ for $1\leq i\leq n$.
\end{chunk}

\begin{remark} \label{R_NonzeroAnnihilatorRemark} The assumption on the annihilator in \Cref{P_QuasiGorAnnihilator} happens often. Let $\underline{x}=x_{1},\dots,x_{n}$ be a sequence in a noetherian local ring $A$.
\begin{enumerate}
\item If the length of the longest $A$-regular sequence in $(\underline{x})$ is zero, then $\sup{R}=n$ by the depth sensitivity of the Koszul complex \cite[Theorem 1.6.17]{Bruns/Herzog:1993}. Thus, we have that $\ann{A}{(\underline{x})} \cong \h{n}{R} \neq 0$.
\item By \cite[Theorem 3.7]{Avramov/Henriques/Sega:2013} if $\underline{x}$ is a sequence of exact zero divisors, then the length of the longest $A$-regular sequence in $(\underline{x})$ is zero.
\end{enumerate}
\end{remark}

\begin{theorem} \label{P_ExactSequences} Let $A$ be a noetherian local ring, let $x_{1},\dots,x_{n}$ be a sequence in $A$, and let $R_{i}$ be the Koszul complex on $x_{1},\dots,x_{i}$. Then the following are equivalent.
\begin{enumerate}
    \item $A/(x_{1},\dots,x_{i-1})\rightarrow A/(x_{1},\dots,x_{i})$ is quasi-Gorenstein for $1\leq i\leq n$.
    \item $R_{i}\rightarrow \h{0}{R_{i}}$ is quasi-Gorenstein for $1\leq i\leq n$.
    \item $R_{n}\rightarrow \h{0}{R_{n}}$ is quasi-Gorenstein and $\gdim{A/(x_{1},\dots,x_{i-1})}{A/(x_{1},\dots,x_{i})}$ is finite for $1\leq i\leq n$.
    \item $x_{1},\dots,x_{n}$ is an exact sequence.
\end{enumerate}
\end{theorem}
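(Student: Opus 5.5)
The plan is to reduce everything to the case of a single element and then assemble the chain using the composition results of \Cref{S_QGMorphismsDGAS}. Write $B_{i}=A/(x_{1},\dots,x_{i})$, so that $B_{0}=A$ and $B_{i}=B_{i-1}/(x_{i})$.

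\textbf{Step 1: the one-element case.} For a noetherian local ring $B$ and a non-unit $x\in B$, I will show that $B\rightarrow B/(x)$ is quasi-Gorenstein if and only if $x$ is exact.
\begin{itemize}
\item If $x$ is $B$-regular, then $B/(x)$ has projective dimension $1$ and $\rhom{B}{B/(x)}{B}\simeq \Sigma^{-1}B/(x)$.
\item If $x$ is an exact zero divisor with $\ann{B}{x}=(y)$ and $\ann{B}{y}=(x)$, then $B/(x)$ has the periodic free resolution $\cdots\xrightarrow{x}B\xrightarrow{y}B\xrightarrow{x}B$. Dualizing gives the complex $B\xrightarrow{x}B\xrightarrow{y}B\xrightarrow{x}\cdots$. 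Its homology is $\ann{B}{x}=(y)\cong B/(x)$ in degree $0$, and it vanishes elsewhere because $\ann{B}{y}=(x)$ and $\ann{B}{x}=(y)$. Hence $\rhom{B}{B/(x)}{B}\simeq B/(x)$, and in particular $\gdim{B}{B/(x)}=0$ is finite.
\item Conversely, suppose $B\rightarrow B/(x)$ is quasi-Gorenstein. Then $\rhom{B}{B/(x)}{B}\simeq \Sigma^{a}B/(x)$, and its lowest possible homology is $\Hom{B}{B/(x)}{B}\cong \ann{B}{x}$.
 \begin{itemize}
 \item If $\ann{B}{x}=0$, then $x$ is regular.
 \item Otherwise $a=0$ and $\ann{B}{x}\cong B/(x)$, so $x$ is an exact zero divisor.
 \end{itemize}
\end{itemize}
In the degenerate case $x=0$ we have $\ann{B}{x}=B$, which is excluded by the definition in \Cref{C_ExactZeroDivisor}. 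I will have to check how the statement intends this case and treat it separately.

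Applying Step 1 to each pair $B_{i-1}\rightarrow B_{i}$ gives (1)$\Leftrightarrow$(4) directly.

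\textbf{Step 2: (1)$\Rightarrow$(2).} The morphism $A\rightarrow R_{i}$ is quasi-Gorenstein by \cite[Lemma 4.8]{Frankild/Jorgensen:2003}, since $R_{i}$ is perfect and self-dual up to shift. Composing the maps $B_{j-1}\rightarrow B_{j}$ for $j\leq i$ and using \Cref{P_CompositionQuasiGor} shows that $A\rightarrow B_{i}=\h{0}{R_{i}}$ is quasi-Gorenstein. Then \Cref{P_PhiPsiPsiGorImpliesPhiGor}, applied to $A\rightarrow R_{i}\rightarrow \h{0}{R_{i}}$, gives (2).

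\textbf{Step 3: (2)$\Rightarrow$(3).} The first half of (3) is the case $i=n$ of (2). For the finiteness of $\gdim{B_{i-1}}{B_{i}}$, note that $A\rightarrow B_{i}$ is quasi-Gorenstein for every $i$, by \Cref{P_CompositionQuasiGor} applied to $A\rightarrow R_{i}\rightarrow B_{i}$. Now apply \Cref{P_PhiPsiPsiGorImpliesPhiGor} to $A\rightarrow B_{i-1}\rightarrow B_{i}$. This shows $B_{i-1}\rightarrow B_{i}$ is quasi-Gorenstein, which in fact already yields (1) and in particular finite $\gdim{B_{i-1}}{B_{i}}$.

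\textbf{Step 4: (3)$\Rightarrow$(1), the main obstacle.} By \Cref{P_CompositionQuasiGor}, the composite $A\rightarrow R_{n}\rightarrow B_{n}$ is quasi-Gorenstein. The difficulty is to deduce that each link $B_{i-1}\rightarrow B_{i}$ is quasi-Gorenstein, knowing only that each has finite Gorenstein dimension. I plan to use the Avramov--Foxby Bass series formula: for a finite local map $\varphi\colon B\rightarrow C$ with $\gdim{B}{C}<\infty$,
\[\bseries{C}{C}=\mathsf{P}_{\rhom{B}{C}{B}}^{C}(t)\,\bseries{B}{B}\]
(cf.\ \cite[Sections 7--8]{Avramov/Foxby:1997}), and $\varphi$ is quasi-Gorenstein exactly when that Poincaré series is a monomial, by \cite[Corollary B.8.1]{Avramov/Iyengar/Nasseh/SatherWagstaff:2019}. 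Chaining the formula along $A=B_{0}\rightarrow B_{1}\rightarrow\cdots\rightarrow B_{n}$ expresses $\bseries{B_{n}}{B_{n}}/\bseries{A}{A}$ as the product of the Poincaré series of the links. Since $A\rightarrow B_{n}$ is quasi-Gorenstein, this product is a monomial. Each factor is a nonzero series with nonnegative integer coefficients, so every factor must itself be a monomial. Hence each $B_{i-1}\rightarrow B_{i}$ is quasi-Gorenstein, giving (1).

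The points I expect to need care are exactly these: the validity of the Bass series factorization under only finite G-dimension (which is where that hypothesis in (3) is used), and that Poincaré series are nonzero so that the product argument applies.
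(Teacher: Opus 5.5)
Your proposal is correct. Its skeleton matches the paper's: (1)$\Rightarrow$(2)$\Rightarrow$(1) via \Cref{P_CompositionQuasiGor} and \Cref{P_PhiPsiPsiGorImpliesPhiGor}, with $A\to R_i$ quasi-Gorenstein by \cite[Lemma 4.8]{Frankild/Jorgensen:2003}.

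The genuine difference is in (1)$\Leftrightarrow$(4).
\begin{itemize}
\item The paper cites \cite{Avramov/Henriques/Sega:2013} and \cite{Garcia/Soto:2004} for (4)$\Rightarrow$(1). For (1)$\Rightarrow$(4) it factors $B_{i-1}\to B_i$ through the Koszul complex $K_{B_{i-1}}(x_i)$ and gets $K_{B_{i-1}}(x_i)\to B_i$ quasi-Gorenstein from \Cref{P_PhiPsiPsiGorImpliesPhiGor}. It then applies \Cref{P_QuasiGorAnnihilator}, which rests on Shaul's computation of the top homology of $\rhom{R}{\HR}{R}$.
\item You stay over the ring $B_{i-1}$. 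The self-dual periodic resolution gives one direction. Comparing $\h{0}{\rhom{B}{B/(x)}{B}}\cong\ann{B}{x}$ with $\Sigma^{a}B/(x)$ gives the other.
\end{itemize}
Your route is shorter and needs no dg input; the paper's detour mainly serves to exercise the dg results of the preceding section. One small point: in the exact zero divisor case, finiteness of $\gdim{B}{B/(x)}$ needs derived reflexivity, not just knowing the infimum. It follows from your computation and \Cref{L_checkwhenMhasfiniteGdim}, since $\rhom{B}{\rhom{B}{B/(x)}{B}}{B}\simeq\rhom{B}{B/(x)}{B}\simeq B/(x)$.

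For (3)$\Rightarrow$(1), the paper applies \cite[Theorem 8.10.b]{Avramov/Foxby:1997} repeatedly to $A\to B_{i-1}\to B_i$. Your Bass series argument is essentially that theorem's proof unpacked, and it goes through. Write $D_i=\rhom{B_{i-1}}{B_i}{B_{i-1}}$.
\begin{itemize}
\item The factorization $\bseries{B_i}{B_i}=\pseries{D_i}{B_i}\,\bseries{B_{i-1}}{B_{i-1}}$ needs only finiteness of $\gdim{B_{i-1}}{B_i}$; it comes from \Cref{L_RhomSRwithRhomSR} plus adjunction.
\item All the $B_i$ share the residue field $k$, so no field-degree factors appear.
\item $\bseries{A}{A}\neq 0$, so it can be cancelled in the domain of Laurent series.
\item Each $\pseries{D_i}{B_i}$ is nonzero because $\rhom{B_{i-1}}{D_i}{B_{i-1}}\simeq B_i\neq 0$.
\end{itemize}

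Your concern about the degenerate case is justified, and it is a defect of the statement, not something to treat separately. If $x_i\in(x_1,\dots,x_{i-1})$, the $i$th link $B_{i-1}\to B_i$ is the identity, which is quasi-Gorenstein. Yet $x_i=0$ in $B_{i-1}$ is not exact under \Cref{C_ExactZeroDivisor}, because $\ann{B_{i-1}}{(0)}=B_{i-1}$. For example, with $n=1$ and $x_1=0$, conditions (1)--(3) hold and (4) fails.

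The paper's proof has the same blind spot. Its last line passes from $\ann{B_{i-1}}{(x_i)}\cong B_i$ to ``exact zero divisor by definition'' without checking $\ann{B_{i-1}}{(x_i)}\neq B_{i-1}$. So the theorem should be read with the tacit hypothesis $x_i\notin(x_1,\dots,x_{i-1})$ for all $i$. Under that hypothesis your Step 1 yields (1)$\Leftrightarrow$(4) as written.
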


\begin{proof}
The equivalence of (1) and (2) follows from Propositions \ref{P_PhiPsiPsiGorImpliesPhiGor} and \ref{P_CompositionQuasiGor}, and (2) implies (3) by definition. To see that (3) implies (1), consider the composition $A \rightarrow R_n \rightarrow \h{0}{R_n}$. The first arrow is quasi-Gorenstein by \cite[Lemma 4.8]{Frankild/Jorgensen:2003} and the second arrow is quasi-Gorenstein by assumption, and so $A \to \h{0}{R_n}$ is quasi-Gorenstein by \Cref{P_CompositionQuasiGor}. Therefore $A/(x_{1},\dots,x_{i-1})\rightarrow A/(x_{1},\dots,x_{i})$ is quasi-Gorenstein for all $1 \leq i \leq n$ by repeated applications of \cite[Theorem 8.10.b]{Avramov/Foxby:1997}.

We now show the equivalence of (1) and (4). We have (4) implies (1) by \cite[3.1 and Theorem 3.7]{Avramov/Henriques/Sega:2013} and \cite[Remark 8]{Garcia/Soto:2004}. We now show (1) implies (4). If $\ann{A/(x_{1},\dots,x_{i-1})}{(x_{i})}=0$, then $x_{i}$ is a nonzerodivisor on $A/(x_{1},\dots,x_{i-1})$. If $\ann{A/(x_{1},\dots,x_{i-1})}{(x_{i})}\neq 0$, then consider the composition
\begin{equation*}
    A/(x_1, \ldots, x_{i-1}) \longrightarrow K_{A/(x_1, \ldots, x_{i-1})}(x_{i}) \longrightarrow A/(x_1, \ldots, x_{i})
\end{equation*}
where $K_{A/(x_1, \ldots, x_{i-1})}(x_{i})$ is the Koszul complex on $x_{i}$ over $A/(x_1, \ldots, x_{i-1})$. The first arrow is quasi-Gorenstein by \cite[Lemma 4.8]{Frankild/Jorgensen:2003}, and the composition is quasi-Gorenstein by assumption. Thus, the second arrow is quasi-Gorenstein by \Cref{P_PhiPsiPsiGorImpliesPhiGor}, and so $\ann{A/(x_{1},\dots,x_{i-1})}{(x_{i})}$ and $A/(x_{1},\dots,x_{i-1},x_{i})$ are isomorphic by \Cref{P_QuasiGorAnnihilator}. This tells us that $x_{i}$ is an exact zero divisor by definition.
\end{proof}

The following example illustrates that all the maps $A/(x_{1},\dots,x_{i-1})\rightarrow A/(x_{1},\dots,x_{i})$ in \Cref{P_ExactSequences} must be quasi-Gorenstein for the sequence to be exact, and it is taken from \cite[Example 4.8]{Dibaei/Gheibi:ExactZeroDivisors}.
\begin{example}
Let $k$ be a field and let $A = k[[x, y, z]]/(x^2, y^2 + xz, z^2)$. Then $A$ is artinian and Gorenstein. The sequence $x, y, z$ is exact, as $A/(x)$, $A/(x, y)$, and $A/(x, y, z)$ are all Gorenstein. However, the map $A \to A/(y)$ is not quasi-Gorenstein, as $A/(y)$ is not Gorenstein. As expected, the sequence $y, x, z$ is not exact by \cite[Proposition 3.3]{Dibaei/Gheibi:ExactZeroDivisors}.
\end{example}

Now using \Cref{P_ExactSequences} we can build new quasi-Gorenstein morphisms of dg-algebras from old ones.
\begin{corollary} \label{C_DGAexampleWithExactElement} Let $R$ be a dg-algebra and let $x$ be in $\HR$. If $R\rightarrow \HR$ is quasi-Gorenstein and $x$ is exact, then $R/\!/x\rightarrow \h{0}{R/\!/x}$ is quasi-Gorenstein.
\end{corollary}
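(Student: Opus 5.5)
The plan is to factor the new augmentation through a composition of morphisms and then apply the two-out-of-three results, \Cref{P_PhiPsiPsiGorImpliesPhiGor} and \Cref{P_CompositionQuasiGor}. Write $\bar A=\HR$ and $\bar x$ for the image of $x$ in $\bar A$. Then $\h{0}{R/\!/x}\cong \bar A/(\bar x)$ by \Cref{C_KoszulObjectStuff}. Consider the commutative square of dg-algebra morphisms
\[R\rightarrow R/\!/x\rightarrow \bar A/(\bar x),\qquad R\rightarrow \bar A\rightarrow \bar A/(\bar x).\]
The bottom composite $R\to \bar A\to \bar A/(\bar x)$ is a composition of two maps. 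The first is quasi-Gorenstein by hypothesis. For the second, I would show that $\bar A\to\bar A/(\bar x)$ is quasi-Gorenstein by applying \Cref{P_ExactSequences} with $n=1$: since $x$ is exact, condition (4) holds for the one-element sequence $x$ over the ring $\bar A$, so condition (1) holds. By \Cref{P_CompositionQuasiGor}, the composite $R\to \bar A/(\bar x)$ is then quasi-Gorenstein.

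Next, I would show that $R\to R/\!/x$ is quasi-Gorenstein. The object $R/\!/x$ is the cone of multiplication by $x$ on $R$, so it lies in $\thick{R}{}{R}$. Hence it has finite projective dimension, and therefore finite Gorenstein dimension by \cite[Proposition 4.1]{Hu/Yang/Zhu:2025}. To get the quasi-Gorenstein condition, I would compute $\rhom{R}{R/\!/x}{R}$. Applying $\rhom{R}{-}{R}$ to the triangle $R\xrightarrow{x}R\to R/\!/x\to$ shows it is the cone of $x$ on $R$ shifted by $-1$, i.e.\ $\Sigma^{-1}R/\!/x$. This is the dg analog of self-duality of the Koszul complex, and mirrors \cite[Lemma 4.8]{Frankild/Jorgensen:2003}.

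The main obstacle is checking that this isomorphism holds in $\catd{R/\!/x}$, not merely in $\catd{R}$. My first attempt would be to use the dg-algebra structure from \cite{Minamoto:2021a,Shaul:2020}. One option is the adjunction $\rhom{R}{R/\!/x}{R}\simeq \rhom{R}{R/\!/x}{R}$ viewed as a $R/\!/x$-module, together with the fact that $\h{}{}$ of it is a cyclic $\h{}{R/\!/x}$-module generated in the top degree. Another is to invoke \Cref{P_DwyerGreenleesIyengarTheorem}: the module $N=\rhom{R}{R/\!/x}{R}$ lies in $\catdbf{R/\!/x}$, and by \Cref{L_RhomSRwithRhomSR} its endomorphism dg-algebra over $R/\!/x$ is $R/\!/x$. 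So if $N$ is virtually $G$ small over $R/\!/x$, which holds when it has finite projective dimension there, then $N$ is a shift of $R/\!/x$. To get that finite projective dimension, I would use a Poincaré series computation over $R/\!/x$, as in the end of the proof of \Cref{T_GvirtuallysmallnessascendsimpliesquasiGor}.

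Finally, the top composite $R\to R/\!/x\to \h{0}{R/\!/x}$ equals the bottom one, so it is quasi-Gorenstein. Since $R\to R/\!/x$ is quasi-Gorenstein, \Cref{P_PhiPsiPsiGorImpliesPhiGor} gives that $R/\!/x\to \h{0}{R/\!/x}$ is quasi-Gorenstein. A minor point to verify is that the square commutes as dg-algebra maps, i.e.\ that the augmentation of $R/\!/x$ restricts to the augmentation of $R$. This follows from the construction in \Cref{C_KoszulObjectStuff}.
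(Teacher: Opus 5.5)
Your proposal is correct and is essentially the paper's proof: the same commutative square. In it, $\HR\to\HR/(x)$ is quasi-Gorenstein by \Cref{P_ExactSequences}, the composite $R\to\HR/(x)$ is quasi-Gorenstein by \Cref{P_CompositionQuasiGor}, $R\to R/\!/x$ is quasi-Gorenstein by Koszul self-duality, and one concludes with \Cref{P_PhiPsiPsiGorImpliesPhiGor}. The paper simply asserts that $\rhom{R}{R/\!/x}{R}$ is a shift of $R/\!/x$, and neither of your proposed fixes for the $\catd{R/\!/x}$-linearity you flag works as stated: the ``adjunction'' is tautological, and the DGI route never supplies the finite projective dimension it needs. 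The point is settled directly by writing $R/\!/x=R\langle e\mid de=\tilde{x}\rangle$ for a lift $\tilde{x}\in R_{0}$ of $x$; then $\Hom{R}{R/\!/x}{R}$ is a free dg $R/\!/x$-module on the cycle $e^{*}$ of degree $-1$, which is a cycle for degree reasons.
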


\begin{proof}
Consider the following commutative diagram
% https://q.uiver.app/#q=WzAsNCxbMCwwLCJBIl0sWzEsMCwiQiJdLFswLDEsIkMiXSxbMSwxLCJEIl0sWzAsMSwiZiJdLFsxLDMsInAiXSxbMCwyLCJnIiwyXSxbMiwzLCJoIl1d
\[\begin{tikzcd}
	R & \h{0}{R} \\
	R/\!/x & \h{0}{R/\!/x}\cong\h{0}{R}/(x)
	\arrow["f", from=1-1, to=1-2]
	\arrow["g"', from=1-1, to=2-1]
	\arrow["p", from=1-2, to=2-2]
	\arrow["h", from=2-1, to=2-2]
\end{tikzcd}\]
Since $x$ is exact, $p$ is quasi-Gorenstein by \Cref{P_ExactSequences}, and so $pf$ is quasi-Gorenstein by \Cref{P_CompositionQuasiGor}. This implies that $hg$ is quasi-Gorenstein. Since $R/\!/x$ has finite projective dimension and is isomorphic to a shift of $\rhom{R}{R/\!/x}{R}$, we have that $g$ is quasi-Gorenstein, and so $h$ is quasi-Gorenstein by \Cref{P_PhiPsiPsiGorImpliesPhiGor}.
\end{proof}

\section{More (non)examples of quasi-Gorenstein morphisms} \label{S_MoreExamples}
We now provide more examples of dg-algebra morphisms, some of which are quasi-Gorenstein and some of which are not.

\subsection{dg-algebra resolutions}
\Cref{P_ExactSequences} gives examples where $R\rightarrow \HR$ is quasi-Gorenstein and $R$ is a Koszul complex. In \Cref{P_NontrivialQuasiGorDGMorphism} we give a way of producing dg-algebras $R$ that are not Koszul complexes such that $R\rightarrow \HR$ is quasi-Gorenstein. In the following proposition, when we say a homomorphism is Gorenstein, we mean it is quasi-Gorenstein and has finite projective dimension.
\begin{proposition} \label{P_NontrivialQuasiGorDGMorphism} Let $A$ be a noetherian local ring, let $I\subseteq J$ be ideals in $A$ such that $A\rightarrow A/I$ is Gorenstein and $A\rightarrow A/J$ is quasi-Gorenstein. If $R$ is a free resolution of $\ltime{A/I}{A}{A/J}$ that has a dg-algebra structure, then both $A\rightarrow R$ and $R\rightarrow \HR$ are quasi-Gorenstein and the amplitude of $R$ is positive. 
\end{proposition}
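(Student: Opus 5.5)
We first identify $\HR$. Since $R$ is a free resolution of $\ltime{A/I}{A}{A/J}$, we have $\h{0}{R}\cong A/I\otimes_A A/J\cong A/J$, because $I\subseteq J$. The natural map $A\to R$ is a morphism of dg-algebras: $R$ is a dg-algebra resolution, so its degree zero part is a free $A$-module, which we take to be $A$ itself for a minimal resolution, and the unit gives $A\to R$. The induced map $A\to\h{0}{R}=A/J$ is local. Thus the composite $A\to R\to\HR$ is the surjection $A\to A/J$, which is quasi-Gorenstein by hypothesis.

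Next we show $\psi\colon A\to R$ is quasi-Gorenstein. Since $A\to A/I$ has finite projective dimension, $A/I$ is in $\thick{A}{}{A}$. Then $R\simeq\ltime{A/I}{A}{A/J}$, and as a dg $A$-module this is $\ltime{A/I}{A}{-}$ applied to $A/J$. Directly, $\rhom{A}{R}{A}\simeq\rhom{A}{A/I\otimes^{\mathsf L}_A A/J}{A}$. For the $R$-module structure it is cleaner to work with the base change $A/J\to R$. Write $R\simeq A/J\otimes^{\mathsf L}_A A/I$ and use that $A/I$ is perfect over $A$, so $\rhom{A}{A/I}{A}\simeq\Sigma^{-c}A/I$ by Gorensteinness. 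We want the isomorphism
\[\rhom{A}{R}{A}\simeq \rhom{A}{A/J}{\rhom{A}{A/I}{A}}\simeq\rhom{A}{A/J}{\Sigma^{-c}A/I},\]
and then, using perfection of $A/I$, the further identification $\rhom{A}{A/J}{A}\otimes^{\mathsf L}_A \Sigma^{-c}A/I\simeq \Sigma^{-c-d}A/J\otimes^{\mathsf L}_A A/I\simeq \Sigma^{-c-d}R$. Here $d$ comes from $A\to A/J$ being quasi-Gorenstein.

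The hard step is making these isomorphisms hold in $\catd{R}$ rather than just in $\catd{A}$. There are two ways to handle this. One option is to exploit the fact, used earlier in the paper, that a perfect $R$ is quasi-Gorenstein over $A$ once $\rhom{A}{R}{A}$ is a shift of $R$ as $R$-modules. The other option is to argue as in \Cref{P_DwyerGreenleesIyengarTheorem}. In that approach we compute $\rhom{R}{\rhom{A}{R}{A}}{\rhom{A}{R}{A}}\simeq R$ via \Cref{L_RhomSRwithRhomSR}, since $\gdim{A}{R}<\infty$ because $R\in\catdbf{A}$ has finite projective dimension ($A/I$ perfect implies $R$ perfect over $A$? no—rather $\gdim_A R<\infty$ since $R\in\thick{A}{}{A/J}$ and $\gdim_A A/J<\infty$). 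We then check that the Poincaré series over $R$ is a monomial, as in the end of the proof of \Cref{T_GvirtuallysmallnessascendsimpliesquasiGor}.

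Given $\psi$ and $\varphi\psi$ quasi-Gorenstein, \Cref{P_PhiPsiPsiGorImpliesPhiGor} yields that $R\to\HR$ is quasi-Gorenstein.

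Finally, for the amplitude we use $\h{i}{R}=\tor{A}{i}{A/I}{A/J}$. If $\amp R=0$, then $\tor{A}{i}{A/I}{A/J}=0$ for $i>0$. We aim to show this fails: $\h{0}{R}=A/J$, and since the composite is quasi-Gorenstein, \Cref{R_NegativeGorDim} gives $\gdim{R}{\HR}=-\sup R$. Compare this with \Cref{P_FiniteGdimAcrossMorphisms}: $\gdim{A}{A/J}=\gdim{A}{R}+\gdim{R}{A/J}$, where $\gdim_A R=\operatorname{pd}_A A/I-\sup R$ style counts. If $\sup R=0$, this forces $\gdim_A A/J=\gdim_A A/I$, which we would contradict under the implicit nondegeneracy ($I\neq J$, $I\neq 0$) by a depth or grade count. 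I expect this last point to need the precise hypotheses of the statement, and it is where I am least certain.
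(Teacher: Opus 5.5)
Your treatment of the quasi-Gorenstein claims follows the paper. You reduce to $A\to R$ via \Cref{P_PhiPsiPsiGorImpliesPhiGor}, then compute $\rhom{A}{R}{A}$ by adjunction, tensor evaluation (using that $A/I$ is perfect) and the two shift isomorphisms. That you adjoin $A/J$ first rather than $A/I$ is immaterial.

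The genuine gap is the amplitude. Your count ``$\gdim{A}{R}=\projdim{A}{(A/I)}-\sup R$'' is not what your own isomorphism gives: from $\rhom{A}{R}{A}\simeq\Sigma^{-c-d}R$ and $\inf R=0$ one reads off $\gdim{A}{R}=c+d$. The contradiction you aim for is also not there. The equality $\gdim{A}{A/J}=\gdim{A}{A/I}$ is compatible with $0\neq I\subsetneq J$: take $A=k[[x,y]]$, $I=(x^{2})\subsetneq J=(x)$, where both are $1$.

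The missing idea is the one-line computation the paper uses: $\h{1}{R}\cong\tor{A}{1}{A/I}{A/J}\cong(I\cap J)/IJ=I/IJ$. This is nonzero by Nakayama's lemma as soon as $I\neq 0$, since $IJ\subseteq\mfm I$. So the only nondegeneracy needed is $I\neq 0$; $I\neq J$ plays no role.

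Your route can also be repaired. With the correct value $\gdim{A}{R}=c+d$, \Cref{P_FiniteGdimAcrossMorphisms} applied to $A\to R$ together with \Cref{R_NegativeGorDim} gives $d=(c+d)-\sup R$. Hence $\sup R=c=\projdim{A}{(A/I)}$, which is positive when $I\neq 0$ because then $A/I$ is not free. But this is heavier and presupposes both quasi-Gorenstein statements, whereas the Tor computation is independent of them.

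Your worry that the isomorphisms must hold in $\catd{R}$, not only in $\catd{A}$, is legitimate. The paper's proof does not address it either; it presents the same chain and concludes that $A\to R$ is quasi-Gorenstein. So on this point you are at the paper's level, but neither of your proposed remedies settles it. Option (a) restates the definition. Option (b) would need either that $\rhom{A}{R}{A}$ be virtually $G$ small over $R$ (to combine \Cref{L_RhomSRwithRhomSR} with \Cref{P_DwyerGreenleesIyengarTheorem}), or an $R$-linear computation of its Poincar\'e series over $R$. Neither follows from an isomorphism in $\catd{A}$.
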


We prove the proposition after the following remark.
\begin{remark} \label{R_NontrivialQuasiGorDGMorphism} Here we give a way of finding explicit $A,I,J,R$ as in \Cref{P_NontrivialQuasiGorDGMorphism}.
\begin{enumerate}
    \item Under the setup of \Cref{P_NontrivialQuasiGorDGMorphism}, one would expect $\ltime{A/I}{A}{A/J}$ to have infinite projective dimension over $A$. This is because $\ltime{A/I}{A}{A/J}$ has finite projective dimension if and only if $A/J$ has finite projective dimension by \cite[Proposition 16.4.17]{Christensen/Foxby/Holm:2024}, and there exist quasi-Gorenstein ideals $J$ of infinite projective dimension.
    \item Let $(A,\mfm,k)$ be a Gorenstein local ring that is not regular and let $I$ be an ideal such that $A\rightarrow A/I$ is Gorenstein. Since $A$ is Gorenstein, $A\rightarrow k$ is quasi-Gorenstein, and so $I\subseteq \mfm$ are ideals that satisfy the assumptions in \Cref{P_NontrivialQuasiGorDGMorphism}. Since $A$ is not regular, $k$ has infinite projective dimension. Thus, $\ltime{A/I}{A}{k}$ has infinite projective dimension by the paragraph above, which implies $\ltime{A/I}{A}{k}$ cannot be isomorphic to a Koszul complex. See \cite[Example 3.2]{Jorgensen:2003} for an explicit example of a noetherian local ring $A$ and an ideal $I$ such that both $A$ and $A/I$ are Gorenstein but not regular, $A/I$ has finite projective dimension, and $I$ is not generated by a regular sequence. 
    
    Writing down a Gorenstein ideal that is not generated by a regular sequence is far from straightforward. We now explain a way to search for such an ideal. By the Buchsbaum-Eisenbud structure theorem \cite[Theorem 2.1]{Buchsbaum/Eisenbud:1977} an ideal $I$ of grade 3 in a noetherian local ring $A$ is Gorenstein if and only if $I$ is the ideal of $(n-1)$th order pfaffians of some $n\times n$ alternating matrix of rank $n-1$. Moreover, such an $I$ is minimally generated by $n$ elements. It is easy to see that a Gorenstein ideal of grade 1 is generated by a nonzerodivisor, and it is a consequence of the Hilbert-Burch theorem that a Gorenstein ideal of grade 2 is generated by a regular sequence, see \cite[Theorem 1.4.17]{Bruns/Herzog:1993}.

    Let $I$ be an ideal in a Cohen-Macaulay local ring $A$. Then the height of $I$ is at most the minimal number of generators of $I$, and the two are equal if and only if $I$ is generated by a regular sequence. This implies that a Gorenstein ideal of grade 3 coming from an $n\times n$ alternating matrix where $n>\dim{A}$ cannot be generated by a regular sequence. One can use Macaulay2 \cite{Grayson/Stillman:Macaulay2} to compute pfaffians of matrices and the grade of an ideal.
    \item Such an $R$ in \Cref{P_NontrivialQuasiGorDGMorphism} exists. To see this, let $I$ and $J$ be ideals in a ring $A$. Then by \cite[Proposition 2.1.10]{Avramov/1998} both $A/I$ and $A/J$ have a free resolution with a dg-algebra structure. Now take $R$ to be the tensor product of two such resolutions. 
    
    Note that it follows from \cite[Proposition 1.3]{Buchsbaum/Eisenbud:1977} that the minimal free resolution of a cyclic module of projective dimension at most three can be given a dg algebra structure, but by \cite[Theorem 2.3.1]{Avramov/1998} there exists a cyclic module whose minimal free resolution admits no dg-algebra structure. This tells us that if both $I$ and $J$ are Gorenstein ideals of grade at most 3, $R$ can be taken to be the tensor product of the minimal free resolutions of $A/I$ and $A/J$.
\end{enumerate}
\end{remark}

\begin{proof}[Proof of \Cref{P_NontrivialQuasiGorDGMorphism}]
We first show that $R\rightarrow \HR$ is quasi-Gorenstein. Consider
\[A\xrightarrow{f} R\xrightarrow{g} \HR\cong A/J.\]
Since $gf$ is quasi-Gorenstein, by \Cref{P_PhiPsiPsiGorImpliesPhiGor} it is enough to show $f$ is quasi-Gorenstein. Since $A/I$ has finite projective dimension and since $A/J$ has finite Gorenstein dimension, $\ltime{A/I}{A}{A/J}$ has finite Gorenstein dimension. Also, the following isomorphisms hold:
\begin{align*}
    \rhom{A}{R}{A} \simeq \rhom{A}{\ltime{A/I}{A}{A/J}}{A} &\simeq \rhom{A}{A/I}{\rhom{A}{A/J}{A}} \\
    &\simeq \ltime{\rhom{A}{A/I}{A}}{A}{\rhom{A}{A/J}{A}} \\
    &\sim \ltime{A/I}{A}{A/J} \\
    &\simeq R.
\end{align*}
The third isomorphism is by \cite[Corollary 12.3.23]{Christensen/Foxby/Holm:2024} since $A/I$ has finite projective dimension. This shows that $f$ is quasi-Gorenstein. To see that the amplitude of $R$ is positive, just note that $\tor{A}{1}{A/I}{A/J}\neq0$ since $IJ\neq I\cap J$ by Nakayama's lemma.
\end{proof}

\subsection{Trivial extensions} \label{S_TrivialExtension} In this subsection we give a way of producing morphisms of dg-algebras of positive amplitude, some of which are quasi-Gorenstein and some of which are not, by using trivial extensions. For a dg-algebra $R$ and a dg-module $M$, the trivial extension of $R$ by $M$, denoted by $R\ltimes M$, is the dg-module $R\oplus M$ with multiplication defined by
\[(r_{1},m_{1})\cdot (r_{2},m_{2}) := (r_{1}r_{2},r_{1}m_{2}+m_{1}r_{2}).\]
By \cite[Proposition 7.1]{Shaul:2020} if $M$ is in $\catdbf{R}$ and $\inf{M}>0$, then $\h{0}{R\ltimes M} \cong \HR$ and $R\ltimes M$ is a dg-algebra as in \Cref{S_dgAlgebraSetup}. For more on trivial extensions in the dg-algebra setting, see \cite{JorgensenPeter:2003} and \cite[Section 7]{Shaul:2020}.

It is a well-known fact in commutative algebra that a noetherian local ring has a canonical module if and only if R is the homomorphic image of a Gorenstein local ring. This fact has been extended by J\o rgensen to show that a noetherian local ring has a dualizing complex if and only if it is the homomorphic image of a Gorenstein dg-algebra \cite[Theorem 2.2]{JorgensenPeter:2003}. We now adapt J\o rgensen's proof to show that a dg-algebra has a dualizing dg-module if and only if it is the homomorphic image of a Gorenstein dg-algebra.
\begin{proposition} \label{P_ZachJorgensenResult}
Let $R$ be a dg-algebra and let $D$ be a dg $R$-module with $\inf{D} > 0$. Then $D$ is a dualizing dg $R$-module if and only if $R \ltimes D$ is a Gorenstein dg-algebra.
\end{proposition}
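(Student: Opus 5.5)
The plan is to adapt Jørgensen's argument, with the projection $\pi\colon R\ltimes D\to R$ as the central tool. Write $T=R\ltimes D$. Since $\inf{D}>0$, $T$ is a dg-algebra as in \Cref{S_dgAlgebraSetup} with $\h{0}{T}\cong\HR$. Moreover $\pi$ is a morphism of dg-algebras inducing the identity on $\h{0}{}$, so $R$ lies in $\catdbf{T}$ and the residue fields agree. The key computation is an isomorphism in $\catd{R}$
\[\rhom{T}{R}{T}\simeq \rhom{R}{R}{D}\simeq D,\]
valid when $D$ is dualizing (or more generally under the hypotheses at hand). I would obtain it from the triangle $D\to T\to R\to$ of dg $T$-modules, with $D$ a $T$-module via $\pi$, and the adjunction $\rhom{T}{R}{-}\simeq\rhom{R}{R}{\rhom{T}{R}{-}}$.

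For ($\Rightarrow$), assume $D$ is dualizing. Following Jørgensen, I would first establish
\[\rhom{T}{R}{T}\simeq D\quad\text{and}\quad \rhom{T}{D}{T}\simeq R.\]
To do this, I would compare $T$ with $\rhom{R}{T}{D}$, viewed as a $T$-module via restriction along $\pi$. As $R$-modules,
\[\rhom{R}{R\oplus D}{D}\simeq D\oplus\rhom{R}{D}{D}\simeq D\oplus R,\]
and the $T$-module structure matches that of $T=R\ltimes D$ because the canonical map $R\to\rhom{R}{D}{D}$ is an isomorphism. This gives $T\simeq\rhom{R}{T}{D}$ as $T$-modules, which is precisely the coinduced module. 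Then for any $N\in\catdbf{T}$, in particular $N=k$,
\[\rhom{T}{N}{T}\simeq\rhom{T}{N}{\rhom{R}{T}{D}}\simeq\rhom{R}{N}{D}.\]
Since $D$ has finite injective dimension over $R$ and $k$ is finite over $R$, $\ext{T}{n}{k}{T}\cong\ext{R}{n}{k}{D}=0$ for $n\gg0$. Hence $\injdim{T}{T}<\infty$ by \Cref{C_ProjDimInjDim}, i.e. $T$ is Gorenstein. I expect the main obstacle to be making $T\simeq\rhom{R}{T}{D}$ an isomorphism of dg $T$-modules and not merely of $R$-modules. I would write down the explicit map $T\to\Hom{R}{T}{I}$, with $I$ a semi-injective model of $D$, sending $t\mapsto(s\mapsto \text{the }D\text{-component of } ts)$, check that it is $T$-linear, and then check that it is a quasi-isomorphism using the dualizing property.

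For ($\Leftarrow$), assume $T$ is Gorenstein. Then $T$ is dualizing for itself, so $\rhom{T}{R}{T}$ is dualizing for $R$; this is the standard fact that dualizing modules transfer along finite maps, and it follows from the adjunction isomorphisms as in \Cref{P_RGorAndSGorSimultaneously}, since $\ext{R}{n}{k}{\rhom{T}{R}{T}}\cong\ext{T}{n}{k}{T}$ together with a reflexivity check. It then remains to identify $\rhom{T}{R}{T}$ with $D$. Applying $\rhom{T}{R}{-}$ to the triangle $D\to T\to R\to$ and using $\inf{D}>0$ together with degree considerations, I would compare with Jørgensen's argument, which identifies $D$ as $\rhom{T}{R}{T}$ through the structure of the trivial extension. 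The hypothesis $\inf{D}>0$ is what makes the degree bookkeeping work. If a direct identification proves hard, the fallback is to show that $D$ has finite injective dimension and satisfies $R\simeq\rhom{R}{D}{D}$ directly, by computing $\rhom{T}{D}{T}$ in two ways.
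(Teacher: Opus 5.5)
Your $(\Rightarrow)$ is sound and is essentially the paper's route. The paper also passes through the coinduced module $\rhom{R}{T}{D}$: it quotes Yekutieli to see that this module is dualizing over $T$, then defers to Jørgensen. You instead write down the $T$-linear comparison map, whose components are $\mathrm{id}_D$ and the homothety $R\to\rhom{R}{D}{D}$, and read off $\ext{T}{n}{k}{T}\cong\ext{R}{n}{k}{D}$ by adjunction, so you never need Yekutieli. One small fix: as written, $s\mapsto$ ``the $D$-component of $ts$'' lands in $D$ rather than in $I$. Compose with $D\to I$, or replace $T$ by the quasi-isomorphic dg-algebra $R\ltimes I$.

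The genuine gap is in $(\Leftarrow)$. Showing that $\omega:=\rhom{T}{R}{T}$ is dualizing for $R$ is exactly where the paper's written proof of this direction also ends. But the statement is about $D$, and for the identification $\omega\simeq D$ you give only ``degree considerations'' and an unspecified fallback, not an argument. No bookkeeping with $\inf{D}>0$ can suffice, because the claim fails for $D=0$:
\begin{itemize}
\item $\inf{D}=\infty>0$ and $R\ltimes 0=R$, which is Gorenstein whenever $R$ is;
\item yet $0$ is not dualizing, and $\omega=R\not\simeq D$.
\end{itemize}
So the converse needs $D\neq 0$, and any proof must use this. For the same reason, your parenthetical that $\rhom{T}{R}{T}\simeq D$ holds ``more generally under the hypotheses at hand'' is false.

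Here is one way to finish. Via the inclusion $\iota\colon R\to T$, we have $T\cong R\oplus D$ as dg $R$-modules. Consider the $R$-linear maps
\[\sigma\colon D\simeq\rhom{R}{R}{D}\longrightarrow\rhom{T}{R}{D}\longrightarrow\rhom{T}{R}{T},\qquad \rho\colon\rhom{T}{R}{T}\xrightarrow{\ \pi^{*}\ }\rhom{T}{T}{T}\simeq T\longrightarrow D.\]
Here $\sigma$ is restriction of scalars along $\pi$ followed by the inclusion $D\to T$, and $\rho$ is evaluation at $1$ followed by the projection. By naturality, $\rho\sigma=\mathrm{id}_D$, since an $R$-linear map $f\colon R\to D$ and $f\pi$ have the same value at $1$. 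Because $\omega$ is dualizing, $\Hom{\catd{R}}{\omega}{\omega}\cong\h{0}{R}$ is a local ring, so the idempotent $\sigma\rho$ is $0$ or $1$. It cannot be $0$ when $D\neq 0$, since then $\mathrm{id}_D=\rho(\sigma\rho)\sigma=0$. Hence $\sigma\colon D\to\omega$ is an isomorphism with inverse $\rho$, and $D$ is dualizing.
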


\begin{proof}
First assume $D$ is a dualizing dg $R$-module. In the canonical morphism $R \to R \ltimes D$, the induced homomorphism on the zeroth homologies is an isomorphism. Thus, by \cite[Proposition 7.5]{Yekutieli:2016}, we have that $\rhom{R}{R \ltimes D}{D}$ is a dualizing dg ($R \ltimes D$)-module. Now a similar argument as in \cite[Theorem 2.2]{JorgensenPeter:2003} shows that $R \ltimes D$ is a Gorenstein dg-algebra.

Now suppose that $R \ltimes D$ is a Gorenstein dg-algebra. Then $R \ltimes D$ is a dualizing dg ($R \ltimes D$)-module. In the canonical morphism $R \ltimes D \to R$, the induced homomorphism on the zeroth homologies is an isomorphism. Thus, by \cite[Proposition 7.5]{Yekutieli:2016}, we have that $\rhom{R \ltimes D}{R}{R \ltimes D}$ is a dualizing dg $R$-module.
\end{proof}

For the next result, recall that if $R$ is homologically $\mathfrak{m}$-adically complete, then $R$ has a dualizing dg-module by \cite[Proposition 7.21]{Shaul:2018}.
\begin{corollary} \label{C_UsingTrivialExtensions} Let $R$ be a dg-algebra. Then the following hold.
\begin{enumerate}
    \item Assume $D$ is a dualizing dg-module with $\inf{D}>0$. Then $R\rightarrow R\ltimes D$ is quasi-Gorenstein if and only if $R$ is Gorenstein.
    \item The canonical morphism $R\rightarrow R\ltimes \Sigma^n R$ is quasi-Gorenstein.
\end{enumerate}
\end{corollary}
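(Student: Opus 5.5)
For (1), I would compute $\rhom{R}{R\ltimes D}{R}$ directly. As a dg $R$-module, $R\ltimes D = R\oplus D$, so
\[\rhom{R}{R\ltimes D}{R}\simeq R\oplus \rhom{R}{D}{R}.\]
If $R$ is Gorenstein, then $R$ is itself a dualizing dg-module. Dualizing dg-modules should be unique up to shift and tensoring by an invertible object; over a local dg-algebra the invertible object is a shift of $R$, and I would cite the dg uniqueness result of Frankild--Iyengar--J\o rgensen for this. Hence $D\simeq \Sigma^{s}R$ for some $s$, and $s>0$ since $\inf D>0$. Then $\rhom{R}{D}{R}\simeq \Sigma^{-s}R$, so
\[\rhom{R}{R\ltimes D}{R}\simeq R\oplus \Sigma^{-s}R \simeq \Sigma^{-s}(R\oplus\Sigma^{s}R).\]
I would then check that this is isomorphic in $\catd{R\ltimes D}$ to $\Sigma^{-s}(R\ltimes \Sigma^sR)$. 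Concretely, the $R\ltimes D$-module structure on $\rhom{R}{R\ltimes D}{R}$ is given by precomposition, which identifies it with the ``dual'' trivial extension. I would exhibit the isomorphism by choosing the generator $R\ltimes D\to R$ given by the projection onto $D\simeq\Sigma^sR$, followed by the shift. Finiteness of $\gdim{R}{R\ltimes D}$ holds because $R\ltimes D\simeq R\oplus\Sigma^s R$ lies in $\thick{R}{}{R}$. This case also settles (2), since (2) is the same computation with $D=\Sigma^nR$, and it needs no Gorenstein hypothesis because $\rhom{R}{\Sigma^nR}{R}\simeq\Sigma^{-n}R$ holds for any $R$. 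For (2), I will need $n>0$ (implicit in the statement) so that the trivial extension is a dg-algebra as in the setup.

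For the converse of (1), suppose $R\to R\ltimes D$ is quasi-Gorenstein. By \Cref{P_ZachJorgensenResult}, $R\ltimes D$ is a Gorenstein dg-algebra, so \Cref{P_RGorAndSGorSimultaneously} applied to the quasi-Gorenstein morphism $R\to R\ltimes D$ gives that $R$ is Gorenstein. This also gives a slicker forward direction: \Cref{P_RGorAndSGorSimultaneously} is an iff, but it presupposes quasi-Gorensteinness. So the forward direction still requires the explicit computation above.

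The main obstacle is the forward direction of (1) and the verification in (2): upgrading the $R$-module isomorphism
\[\rhom{R}{R\ltimes \Sigma^sR}{R}\simeq R\oplus\Sigma^{-s}R\]
to an isomorphism of dg $R\ltimes\Sigma^sR$-modules with a shift of $R\ltimes\Sigma^sR$. My first attempt is to work with strict dg-modules: $R\ltimes\Sigma^sR$ is semifree over $R$, so $\rhom{}{}{}$ can be computed as the honest Hom complex $\Hom{R}{R\oplus\Sigma^sR}{R}$. I would then write down the map
\[\Sigma^{-s}(R\ltimes\Sigma^sR)\to\Hom{R}{R\oplus\Sigma^sR}{R},\]
sending the generator $\Sigma^{-s}(0,1)$ to the projection functional. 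I would check directly that this map is linear over the trivial extension and is a bijection, and hence a quasi-isomorphism.
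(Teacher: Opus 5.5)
Your proof is correct, and for the implication ``$R$ Gorenstein $\Rightarrow$ quasi-Gorenstein'' in (1), and for (2), it takes a different route from the paper. The implication ``quasi-Gorenstein $\Rightarrow$ $R$ Gorenstein'' is exactly the paper's argument: \Cref{P_ZachJorgensenResult} followed by \Cref{P_RGorAndSGorSimultaneously}.

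\textbf{The paper's route.} For the other implication of (1), the paper never identifies $D$. It uses \cite[Proposition 7.5]{Yekutieli:2016} to see that $\rhom{R}{R\ltimes D}{R}$ is a dualizing dg $(R\ltimes D)$-module. It then uses \Cref{P_ZachJorgensenResult} to know that $R\ltimes D$ is Gorenstein, and invokes uniqueness of dualizing dg-modules over $R\ltimes D$. For (2), it computes the isomorphism only in $\catd{R}$ and defers the upgrade to $\catd{R\ltimes\Sigma^nR}$ to ``a similar argument as'' J\o rgensen's.

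\textbf{Your route.} You apply uniqueness of dualizing dg-modules over $R$ instead, obtaining $D\simeq\Sigma^sR$ with $s=\inf D>0$. This reduces the implication in (1) to (2). You then settle (2) with an explicit strict isomorphism. Set $T=R\ltimes\Sigma^sR$; it is $R$-free on the cycles $e_0=(1,0)$ and $e_s$, so $\Hom{R}{T}{R}$ computes $\rhom{R}{T}{R}$ in $\catd{T}$. The dual functional $e_s^*$ is a cycle of degree $-s$ with $e_0\cdot e_s^*=e_s^*$ and $e_s\cdot e_s^*=\pm e_0^*$. Hence $t\mapsto t\cdot e_s^*$ is a $T$-linear bijective chain map $\Sigma^{-s}T\to\Hom{R}{T}{R}$, after the usual shift signs.

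\textbf{Comparison.} Both routes rest on uniqueness of dualizing dg-modules up to shift, applied over different dg-algebras. Yours is more elementary and shows the implication in (1) is really a special case of (2). It avoids coinduction and \Cref{P_ZachJorgensenResult} in that direction, and it fills in the step of (2) that the paper only sketches. It also verifies $\gdim{R}{T}<\infty$ explicitly, via $T\in\thick{R}{}{R}$. The paper's route, in exchange, works with $D$ directly and needs no comparison between $R\ltimes D$ and $R\ltimes\Sigma^sR$.

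\textbf{One point to make explicit.} You cannot literally compare with $\Sigma^{-s}(R\ltimes\Sigma^sR)$ inside $\catd{R\ltimes D}$, since $R\ltimes\Sigma^sR$ is not an $R\ltimes D$-module. The fix is short:
\begin{itemize}
\item Because $\Sigma^sR$ is semifree, the isomorphism $\Sigma^sR\simeq D$ is represented by a quasi-isomorphism $\phi$ of dg $R$-modules.
\item Then $(r,m)\mapsto(r,\phi(m))$ is a quasi-isomorphism of dg-algebras $R\ltimes\Sigma^sR\to R\ltimes D$ under $R$.
\item Quasi-Gorensteinness is invariant under such a replacement, because restriction of scalars is an equivalence of derived categories compatible with $\rhom{R}{-}{R}$.
\end{itemize}
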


\begin{proof}
We first prove (1). By \Cref{P_ZachJorgensenResult} $R\ltimes D$ is Gorenstein, and so $R\rightarrow R\ltimes D$ being quasi-Gorenstein implies $R$ is Gorenstein by \Cref{P_RGorAndSGorSimultaneously}. Now if $R$ is Gorenstein, then $R$ is a dualizing dg $R$-module, and so $\rhom{R}{R\ltimes D}{R}$ is a dualizing dg $(R\ltimes D)$-module by \cite[Proposition 7.5]{Yekutieli:2016}. Since dualizing dg modules are unique in the derived category up to shift by \cite[Corollary 7.16]{Yekutieli:2016} and since $R\ltimes D$ is Gorenstein by \Cref{P_ZachJorgensenResult}, $R\rightarrow R\ltimes D$ is quasi-Gorenstein.

We now prove (2). The following isomorphisms hold in $\catd{R}$
\begin{align*}
    \rhom{R}{R\ltimes \Sigma^n R}{R} \simeq \rhom{R}{R\oplus \Sigma^n R}{R} \simeq R\oplus \Sigma^{-n}R &\simeq \Sigma^{-n}(R\oplus \Sigma^n R) \\
    &\simeq \Sigma^{-n}(R\ltimes \Sigma^n R).
\end{align*}
This shows that $\rhom{R}{R\ltimes \Sigma^n R}{R} \sim (R \ltimes \Sigma^n R)$ in $\cat{R}{D}$, and a similar argument as \cite[Theorem 2.2]{JorgensenPeter:2003} gives that this isomorphism arises from a morphism in $\cat{R \ltimes D}{D}$. Thus, the canonical morphism is quasi-Gorenstein.
\end{proof}

\section{Gorenstein projective modules over Koszul complexes} \label{S_GorProjModsKoszul}
In this section we give ways of producing nonfree Gorenstein projective dg modules over Koszul complexes. 
\begin{lemma} \label{L_QuotientByExactZeroDivisorGorProj} Let $A$ be a noetherian local ring, let $\underline{x}=x_{1},\dots,x_{n}$ be a sequence of exact zero divisors, and let $R$ be the Koszul complex on $\underline{x}$. Then we have the equalities $\amp{R}=n$, $\gdim{A}{A/(\underline{x})}=0$, and $\gdim{R}{A/(\underline{x})}=-n$.
\end{lemma}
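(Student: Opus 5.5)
The plan is to reduce everything to \Cref{P_QuasiGorAnnihilator}, using \Cref{P_ExactSequences} to supply the quasi-Gorenstein hypothesis and \Cref{R_NonzeroAnnihilatorRemark} to supply the nonvanishing annihilator.

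First, a sequence of exact zero divisors is in particular an exact sequence in the sense of \Cref{C_ExactZeroDivisor}. So the implication (4)$\Rightarrow$(2) of \Cref{P_ExactSequences} shows that $R=R_n\rightarrow \h{0}{R}=A/(\underline{x})$ is quasi-Gorenstein. Implicit here is that $\gdim{R}{\HR}<\infty$, which is part of the definition of quasi-Gorenstein.

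Second, by \Cref{R_NonzeroAnnihilatorRemark}(2), the ideal $(\underline{x})$ contains no $A$-regular element. Then \Cref{R_NonzeroAnnihilatorRemark}(1) gives $\sup R=n$ and $\ann{A}{(\underline{x})}\cong\h{n}{R}\neq 0$. Since $\inf R=0$ (because $\h{0}{R}=A/(\underline{x})\neq 0$, as the $x_i$ lie in the maximal ideal; this holds since an exact zero divisor satisfies $A/(x)\neq 0$ and $\ann{A}{(x)}\neq A$), we get $\amp R=\sup R-\inf R=n$.

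Third, with both hypotheses of \Cref{P_QuasiGorAnnihilator} now verified, that proposition directly yields $\gdim{R}{\HR}=-n$ and $\gdim{A}{A/(\underline{x})}=0$. This completes all three equalities.

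The only step requiring care is confirming that the hypotheses of \Cref{P_QuasiGorAnnihilator} are met, namely the nonvanishing of $\ann{A}{(\underline{x})}$. This depends on citing \cite[Theorem 3.7]{Avramov/Henriques/Sega:2013} through \Cref{R_NonzeroAnnihilatorRemark}, together with the depth sensitivity of the Koszul complex. The rest is bookkeeping.
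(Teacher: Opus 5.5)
Your proof is correct and follows essentially the same route as the paper. You use \Cref{P_ExactSequences} to get that $R\rightarrow \HR$ is quasi-Gorenstein, \Cref{R_NonzeroAnnihilatorRemark} to get $\h{n}{R}\cong\ann{A}{(\underline{x})}\neq 0$ and hence $\amp{R}=n$, and \Cref{P_QuasiGorAnnihilator} for the two Gorenstein dimensions; your explicit check that $\inf{R}=0$ (because $A/(\underline{x})\neq 0$) is a detail the paper leaves implicit.
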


\begin{proof}
Consider the composition $A\xrightarrow{f} R\xrightarrow{g} A/\underline{x}$. By \Cref{P_ExactSequences} $g$ is quasi-Gorenstein, and by \Cref{R_NonzeroAnnihilatorRemark}  $\h{n}{R}=\ann{A}{(\underline{x})}\neq 0$. This implies $\amp{R}=n$. Also, by \Cref{P_QuasiGorAnnihilator} we have $\gdim{A}{A/\underline{x}}=0$ and $\gdim{R}{A/(\underline{x})}=-n$. 
\end{proof}

\begin{proposition} \label{P_NontrivialGorProjExample} Let $A$ be a noetherian local ring, let $\underline{x}=x_{1},\dots,x_{n}$ be a sequence of exact zero divisors, and let $R$ be the Koszul complex on $\underline{x}$. If $M$ is a finitely generated $A/\underline{x}$-module with finite Gorenstein dimension, then the following equalities hold.
\begin{enumerate}
    \item $\amp{\Sigma^{n-\gdim{A}{M}}M\oplus \Sigma^{-\gdim{A}{M}}M}=\amp{R}$.
    \item $\gdim{R}{\Sigma^{n-\gdim{A}{M}}M\oplus \Sigma^{-\gdim{A}{M}}M}=0$.
\end{enumerate}
\end{proposition}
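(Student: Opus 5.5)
The plan is to compute both sides directly from the shift behaviour of $\inf$ and $\operatorname{Gdim}$. The values of $\operatorname{Gdim}_R M$ come from \Cref{P_FiniteGdimAcrossMorphisms}, applied to two quasi-Gorenstein morphisms, together with \Cref{L_QuotientByExactZeroDivisorGorProj}. Throughout, write $g=\gdim{A}{M}$ and assume $M\neq 0$; if $M=0$ the statement is vacuous or trivial.

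For (1), view $M$ as a dg-module concentrated in degree $0$, so $\inf M=\sup M=0$. Then $\Sigma^{n-g}M$ has homology only in degree $n-g$, and $\Sigma^{-g}M$ has homology only in degree $-g$. Their direct sum therefore has $\sup=n-g$ and $\inf=-g$, hence amplitude $n$. This equals $\amp R$ by \Cref{L_QuotientByExactZeroDivisorGorProj}.

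For (2), first record two formal facts.
\begin{itemize}
\item Shifting: $\rhom{R}{\Sigma^{a}X}{R}\simeq\Sigma^{-a}\rhom{R}{X}{R}$, so $\gdim{R}{\Sigma^aX}=\gdim{R}{X}+a$.
\item Direct sums: $\rhom{R}{X\oplus Y}{R}$ is the direct sum of the two duals, so $\gdim{R}{X\oplus Y}=\max\{\gdim{R}{X},\gdim{R}{Y}\}$. Derived reflexivity passes to finite sums and shifts, so the sum lies in the domain of $\operatorname{Gdim}$ whenever each summand does.
\end{itemize}
It thus suffices to compute $\gdim{R}{M}$.

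Use the morphisms $A\to A/(\underline{x})$ and $R\to A/(\underline{x})$.
\begin{itemize}
\item The morphism $A\to A/(\underline{x})$ is quasi-Gorenstein by \Cref{P_ExactSequences}, since a sequence of exact zero divisors is exact and (4) implies (1), together with \Cref{P_CompositionQuasiGor}. Applying \Cref{P_FiniteGdimAcrossMorphisms} to it gives $\gdim{A/(\underline{x})}{M}<\infty$ and
\[g=\gdim{A}{A/(\underline{x})}+\gdim{A/(\underline{x})}{M}=\gdim{A/(\underline{x})}{M},\]
using $\gdim{A}{A/(\underline{x})}=0$ from \Cref{L_QuotientByExactZeroDivisorGorProj}.
\item The morphism $R\to A/(\underline{x})$ is quasi-Gorenstein, again by \Cref{P_ExactSequences}. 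Applying \Cref{P_FiniteGdimAcrossMorphisms} to it gives
\[\gdim{R}{M}=\gdim{R}{A/(\underline{x})}+\gdim{A/(\underline{x})}{M}=-n+g.\]
\end{itemize}

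Combining with the shift formula, $\gdim{R}{\Sigma^{n-g}M}=(g-n)+(n-g)=0$ and $\gdim{R}{\Sigma^{-g}M}=(g-n)-g=-n$. The maximum of these is $0$, which proves (2). Combined with (1) and $\inf=-g$, this is the input for recognizing the module, after a suitable shift, as Gorenstein projective.

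The main obstacle is bookkeeping rather than substance. The sign conventions in the shift formula must be tracked carefully. One must also check that $M$, as a finitely generated $A/(\underline{x})$-module, lies in $\catdbf{A/(\underline{x})}$, so that \Cref{P_FiniteGdimAcrossMorphisms} applies to both quasi-Gorenstein morphisms.
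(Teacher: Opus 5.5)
Your proof is correct and follows the paper's strategy: additivity of Gorenstein dimension along quasi-Gorenstein maps (\Cref{P_FiniteGdimAcrossMorphisms}), together with $\gdim{A}{M}=\gdim{A/(\underline{x})}{M}$ and bookkeeping for shifts and direct sums. The only difference is the map used to transfer to $R$: the paper goes along $A\to R$ (quasi-Gorenstein by Frankild--J\o rgensen), computes $\operatorname{Gdim}_A$ of the sum to be $n$ and subtracts $\gdim{A}{R}=n$, whereas you go along $R\to A/(\underline{x})$ using $\gdim{R}{A/(\underline{x})}=-n$ and then shift, which works equally well.
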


\begin{proof}
Consider the composition $A\xrightarrow{f} R\xrightarrow{g} A/\underline{x}$. For (1), just note that \Cref{L_QuotientByExactZeroDivisorGorProj} gives the second equality below
\[\amp{\Sigma^{n-\gdim{A}{M}}M\oplus \Sigma^{-\gdim{A}{M}}M}=n=\amp{R}.\]
For (2), it is by \Cref{L_QuotientByExactZeroDivisorGorProj} that we have $\gdim{A}{A/(\underline{x})}=0$. Also, $f$ is quasi-Gorenstein by \cite[Lemma 4.8]{Frankild/Jorgensen:2003}, which implies $gf$ is quasi-Gorenstein by \Cref{P_CompositionQuasiGor}. Now \cite[Theorem 7.11]{Avramov/Foxby:1997} gives the first equality below
\[\gdim{A}{M} = \gdim{A}{A/\underline{x}} + \gdim{A/\underline{x}}{M} = \gdim{A/\underline{x}}{M},\]
and so $\gdim{A}{\Sigma^{n-\gdim{A}{M}}M\oplus \Sigma^{-\gdim{A}{M}}M}=n$. Therefore, $f$ being quasi-Gorenstein gives the first equality below by \Cref{P_FiniteGdimAcrossMorphisms}
\begin{align*}
    \gdim{R}{\Sigma^{n-\gdim{A}{M}}M\oplus \Sigma^{-\gdim{A}{M}}M} &= \gdim{A}{\Sigma^{n-\gdim{A}{M}}M\oplus \Sigma^{-\gdim{A}{M}}M} \\
    & \hspace{1.5em}- \gdim{A}{R} \\
    &= 0.\qedhere
\end{align*}
\end{proof}

\begin{lemma} \label{L_ComputeRhomOverRing} Let $A$ be a noetherian local ring, let $R$ be a Koszul complex on $\underline{x}=x_1, \dotsc, x_n$, and let $M$ be a dg $R$-module. Then
\[\rhom{R}{M}{R} \simeq \Sigma^{n}\rhom{A}{M}{A}.\]
\end{lemma}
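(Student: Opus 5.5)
Since $R$ is an $A$-algebra (via the structure map $A\to R$), adjunction gives
\[\rhom{R}{M}{\rhom{A}{R}{A}} \simeq \rhom{A}{\ltime{M}{R}{R}}{A} \simeq \rhom{A}{M}{A}.\]
These isomorphisms hold in $\catd{R}$ by \cite[Proposition 12.10.12]{Yekutieli:2020}. The lemma therefore reduces to identifying $\rhom{A}{R}{A}$ with a shift of $R$ in $\catd{R}$, which is the self-duality of the Koszul complex.

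For the self-duality, recall that $R=\bigwedge A^{n}$ is a finite free $A$-module in each degree, concentrated in degrees $0,\dots,n$. Hence $\rhom{A}{R}{A}=\Hom{A}{R}{A}$ can be computed directly, with no resolution needed. The perfect pairing $\bigwedge^{i}A^{n}\otimes_{A}\bigwedge^{n-i}A^{n}\to\bigwedge^{n}A^{n}\cong A$, given by multiplication in $R$, yields an isomorphism
\[R\to \Sigma^{-n}\Hom{A}{R}{A},\qquad r\mapsto (s\mapsto \pi(rs)),\]
where $\pi$ is projection onto the top degree $\bigwedge^{n}A^{n}$. I need to check three things: that this is a chain map, with signs compatible with the Koszul differential (this follows because $\pi$ composed with the differential vanishes appropriately, i.e.\ the Leibniz rule and $\pi\circ\partial=0$ on degree $n+1$, which is zero); that it is $R$-linear, which holds since it is given by left multiplication and $R$ is graded-commutative; and that it is bijective degreewise, by the perfect pairing on exterior powers. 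The statement is equivalent to $\rhom{A}{R}{A}\simeq \Sigma^{-n}R$ in $\catd{R}$; this is \cite[Lemma 4.8]{Frankild/Jorgensen:2003}, which the paper already uses to say $A\to R$ is quasi-Gorenstein. Its shift must be $-n$: $\inf\Hom{A}{R}{A}\geq -n$, and $\h{-n}$ of it is $A/(\underline{x})$, which is nonzero when $(\underline x)\ne A$; the shift can also be read off degreewise from the construction.

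Combining the two steps gives
\[\rhom{R}{M}{R}\simeq \rhom{R}{M}{\Sigma^{n}\rhom{A}{R}{A}}\simeq \Sigma^{n}\rhom{A}{M}{A}.\]
The main obstacle is bookkeeping rather than content. First, the sign and direction of the shift must be consistent with the paper's convention for $\Sigma$ and with homological grading. Second, the isomorphism must be $R$-linear, not merely $A$-linear, so that the adjunction step produces an isomorphism in $\catd{R}$ and not just in $\catd{A}$. I would settle both by invoking Frankild–Jørgensen's lemma for the $R$-linear identification and checking the shift on the degree $-n$ homology as above.
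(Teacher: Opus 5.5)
Your proposal is correct and is essentially the paper's proof. The paper's proof is exactly the chain $\rhom{R}{M}{R}\simeq\rhom{R}{M}{\Sigma^{n}\rhom{A}{R}{A}}\simeq\Sigma^{n}\rhom{A}{\ltime{M}{R}{R}}{A}\simeq\Sigma^{n}\rhom{A}{M}{A}$; it takes the Koszul self-duality $R\simeq\Sigma^{n}\rhom{A}{R}{A}$ for granted, so your perfect-pairing argument just makes that first step self-contained.

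One bookkeeping slip: $r\mapsto(s\mapsto\pi(rs))$ carries $R_i$ into the degree $i-n$ component of $\Hom{A}{R}{A}$. It is therefore a degree-zero map $R\to\Sigma^{n}\Hom{A}{R}{A}$, not $R\to\Sigma^{-n}\Hom{A}{R}{A}$, and only the former agrees with your (correct) conclusion $\rhom{A}{R}{A}\simeq\Sigma^{-n}R$.
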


\begin{proof}
Just note that we have
\begin{align*}
    \rhom{R}{M}{R} \simeq \rhom{R}{M}{\Sigma^{n}\rhom{A}{R}{A}} &\simeq \Sigma^{n}\rhom{A}{\ltime{M}{R}{R}}{A} \\
    &\simeq \Sigma^{n} \rhom{A}{M}{A}.\qedhere
\end{align*}
\end{proof}

Recall that by \Cref{R_NonzeroAnnihilatorRemark}, if $\underline{x}$ is a sequence of exact zero divisors, then $\ann{A}{(\underline{x})}\neq 0$.
\begin{proposition} \label{P_NonzeroAnnihilatorGorensteinProjective} Let $A$ be a Gorenstein local ring and let $R$ be a Koszul complex on a sequence $\underline{x}=x_{1},\dots,x_{n}$. If $M$ is a finitely generated Gorenstein projective $A$-module, then $\gdim{R}{\ltime{M}{A}{R}}=0$. Moreover, if $\ann{A}{(\underline{x})}\neq 0$ and $(0:_{M}(\underline{x}))\neq 0$, then the equality $\amp{\ltime{M}{A}{R}} =\amp{R}$ holds.
\end{proposition}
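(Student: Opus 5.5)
The plan is to compute $\rhom{R}{\ltime{M}{A}{R}}{R}$ directly, using the adjunction coming from the morphism $A\to R$ together with \Cref{L_ComputeRhomOverRing}. Write $N=\ltime{M}{A}{R}$. Since $R$ is a bounded complex of finite free $A$-modules, $N\simeq M\otimes_{A}R$ is the Koszul complex of $M$ on $\underline{x}$, which lies in $\catdbf{R}$. By \Cref{L_ComputeRhomOverRing} we have $\rhom{R}{N}{R}\simeq \Sigma^{n}\rhom{A}{N}{A}$. Adjunction then gives
\[\rhom{A}{\ltime{M}{A}{R}}{A}\simeq \rhom{A}{R}{\rhom{A}{M}{A}}\simeq \ltime{\rhom{A}{R}{A}}{A}{\rhom{A}{M}{A}},\]
where the last step uses that $R$ has finite projective dimension over $A$. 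Since $M$ is Gorenstein projective over the Gorenstein ring $A$, we have $\rhom{A}{M}{A}\simeq M^{*}$, a module in degree $0$ which is again Gorenstein projective. By self-duality of the Koszul complex, $\rhom{A}{R}{A}\simeq\Sigma^{-n}R$. Putting these together gives $\rhom{R}{N}{R}\simeq \ltime{R}{A}{M^{*}}$, the Koszul complex of $M^{*}$ on $\underline{x}$. This complex is concentrated in degrees $0,\dots,n$, and its degree-zero homology is $M^{*}/(\underline{x})M^{*}$.

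Next I would determine the infimum. If $M^{*}\neq 0$, then $M^{*}/(\underline{x})M^{*}\neq 0$ by Nakayama, so $\inf\rhom{R}{N}{R}=0$. This gives $\gdim{R}{N}=0$, provided $N$ is derived reflexive. If $M=0$, everything is trivial. For derived reflexivity, I would not verify it by hand; instead I would invoke \cite[Proposition 3.1]{Hu/Yang/Zhu:2025} and the thick subcategory characterization. The module $M$ has finite Gorenstein dimension over $A$, and $A$ is Gorenstein, so by \cite[Lemma 4.8]{Frankild/Jorgensen:2003} the map $A\to R$ is quasi-Gorenstein. Then \Cref{P_FiniteGdimAcrossMorphisms} applies to $N$, viewed as a dg $R$-module: $\gdim{A}{N}<\infty$ because $N\in\thick{A}{}{M}$, so $\gdim{R}{N}<\infty$. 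In particular $N$ is derived reflexive, and its Gorenstein dimension is $-\inf$ of the computed dual, namely $0$. As a consistency check, \Cref{P_FiniteGdimAcrossMorphisms} also gives $\gdim{R}{N}=\gdim{A}{N}-\gdim{A}{R}=n-n$, using $\gdim{A}{N}=-\inf\rhom{A}{N}{A}=n$.

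For the amplitude claim, $N$ is the Koszul complex $K(\underline{x};M)$. Its top homology is $\h{n}{N}=(0:_{M}(\underline{x}))$, which is nonzero by hypothesis, and its bottom homology $\h{0}{N}=M/(\underline{x})M$ is nonzero by Nakayama, since $M\neq0$. Hence $\amp N=n$. Likewise $\h{n}{R}=\ann{A}{(\underline{x})}\neq 0$ and $\h{0}{R}=A/(\underline{x})\neq0$, so $\amp R=n$, and the two amplitudes agree.

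The main obstacle is bookkeeping in the first step. I need to make sure the isomorphism $\rhom{R}{N}{R}\simeq\Sigma^{n}\rhom{A}{N}{A}$ from \Cref{L_ComputeRhomOverRing} is applied with the correct shift conventions, so that the infimum comes out as exactly $0$ rather than off by $n$. I also need to justify that $\rhom{A}{M}{A}$ is concentrated in degree $0$, which follows from $\ext{A}{i}{M}{A}=0$ for $i>0$ for Gorenstein projective $M$. The cross-check via \Cref{P_FiniteGdimAcrossMorphisms} is my guard against a sign error.
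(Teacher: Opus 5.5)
Your proposal is correct and is essentially the paper's argument: the same chain through \Cref{L_ComputeRhomOverRing}, adjunction and $\rhom{A}{R}{A}\simeq\Sigma^{-n}R$ identifies $\rhom{R}{\ltime{M}{A}{R}}{R}$ with $M^{*}\otimes_{A}R\cong\Hom{A}{M}{R}$. As in the paper, the infimum comes from Nakayama on $M^{*}/(\underline{x})M^{*}$ and the amplitude from the top Koszul homology.

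The differences are minor. You get finiteness of $\gdim{R}{\ltime{M}{A}{R}}$ by transfer along the quasi-Gorenstein map $A\to R$ via \Cref{P_FiniteGdimAcrossMorphisms}, whereas the paper uses that $R$ is Gorenstein. You use tensor evaluation with $\ext{A}{i}{M}{A}=0$ for $i>0$, whereas the paper cites Christensen--Frankild--Holm.

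You should state explicitly that $M^{*}\neq 0$ because $M\neq 0$ is reflexive. Also, the case $M=0$ is degenerate rather than trivial, since $\gdim{R}{0}=-\infty$; the paper tacitly assumes $M\neq 0$ as well.
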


\begin{proof}
Since $A$ is Gorenstein, $R$ is Gorenstein by \cite[Theorem 4.8]{Frankild/Jorgensen:2003}, and so $\ltime{M}{A}{R}$ has finite Gorenstein dimension over $R$. We now compute the Gorenstein dimension of $\ltime{M}{A}{R}$. We have the following isomorphisms
\begin{align*}
    \rhom{R}{\ltime{M}{A}{R}}{R} \simeq \Sigma^{n}\rhom{A}{\ltime{M}{A}{R}}{A} &\simeq \Sigma^{n}\rhom{A}{M}{\rhom{A}{R}{A}} \\
    &\simeq \Sigma^{n}\rhom{A}{M}{\Sigma^{-n}R} \\
    &\simeq \Sigma^{n-n}\rhom{A}{M}{R} \\
    &\simeq \Hom{A}{M}{R},
\end{align*}
where the first isomorphism is by \Cref{L_ComputeRhomOverRing} and the fifth isomorphism is by \cite[Theorem 2.8]{Christensen/Frankild/Holm:2006} since $M$ is Gorenstein projective and $R$ is a complex of free $A$ modules. Note that 
\[\Hom{A}{M}{R} = \cdots\rightarrow \Hom{A}{M}{A}\rightarrow 0\]
where $\Hom{A}{M}{A}$ is in degree zero and we have an isomorphism
\[\h{0}{\Hom{A}{M}{R}} \cong \Hom{A}{M}{A}/(\underline{x})\Hom{A}{M}{A}.\]
Since $M$ is Gorenstein projective over $A$, $M$ is reflexive, and so $\Hom{A}{M}{A}$ is nonzero, which implies $\Hom{A}{M}{A}/(\underline{x})\Hom{A}{M}{A}$ is nonzero by Nakayama's Lemma. Therefore
\[\gdim{R}{\ltime{M}{A}{R}} = -\inf{\rhom{R}{\ltime{M}{A}{R}}{R}} = -\inf{\Hom{A}{M}{R}} = 0.\]
Moreover, we have
\[\h{n}{R} \cong \ann{A}{(\underline{x})}\neq 0 \quad\quad\text{and}\quad\quad \h{n}{\ltime{M}{A}{R}} \cong (0:_{M}(\underline{x}))\neq 0,\]
which implies $\amp{\ltime{M}{A}{R}} = n = \amp{R}$, finishing the proof.
\end{proof}

The following example shows that rings and modules as in \Cref{P_NonzeroAnnihilatorGorensteinProjective} exist.
\begin{example}
Let $k$ be a field, let $A=k[[x,y]]/(xy)$, and let $M=A/(x)$. Then both $\ann{A}{(x)}$ and $(0:_{M}(x))$ are nonzero. Since the depth of $M$ is one and since $A$ is a one dimensional Gorenstein local ring, $M$ is Gorenstein projective. 
\end{example}

\begin{remark} \label{R_ExactZeroDivisorMCMdgModules} Let $A$ be a noetherian local ring, let $R$ be the Koszul complex on a sequence $\underline{x}$, and assume $R$ is local-Cohen-Macaulay; see \cite[Definition 4.2]{Shaul:2020} for the definition of local-Cohen-Macaulay. If $A$ is Cohen Macaulay, then $R$ is local-Cohen-Macaulay by \cite[Theorem 4.2]{Shaul:2021}, but the converse is not true, see \cite[Remark 4.13]{Shaul:2021}. By \cite[Corollary 1.5]{Hu/Yang/Zhu:2025} if $M$ is a Gorenstein projective dg-module over $R$ such that $\amp{M}=\amp{R}$, then $M$ is a maximal local-Cohen-Macaulay dg-module; see \cite[Definition 6.1 and Definition 6.4]{Shaul:2021} for the definition of maximal local-Cohen-Macaulay dg-module. This tells us that if $A$ is Cohen-Macaulay and if $\underline{x}$ is a sequence of exact zero divisors, then \Cref{P_NontrivialGorProjExample} and \Cref{P_NonzeroAnnihilatorGorensteinProjective} give examples of nonfree maximal local-Cohen-Macaulay dg-modules over local-Cohen-Macaulay dg-algebras.
\end{remark}

\bibliographystyle{amsplain}
\bibliography{references}

\end{document}